\newtheorem{theorem}{Theorem}
\newtheorem{lemma}{Lemma}[section]
\newtheorem{definition}{Definition}[section]
\newtheorem{remark}{Remark}[section]
\newtheorem{example}{Example}
\newcounter{Th-Alfa}
\newcommand{\CC}{\mathds{C}}
\newcommand{\PP}{\mathds{P}}
\newcommand{\RR}{\mathds{R}}
\newcommand{\ZZ}{\mathds{Z}}
\newcommand{\ZZp}{\mathds{Z}_+}
\newcommand{\dsty}{\displaystyle}
\newcommand{\supp}{\mathop{\rm supp}}
\newcommand{\inter}[1]{#1\strut^{\mathrm{o}}\!}
\newcommand{\ch}[1]{\mathbf{Co}\!\left(#1 \right)} 
\def\bbuildrel#1_#2^#3{\mathrel{
 \mathop{\kern 0pt#1}\limits_{#2}^{#3}}}
\def\bbbuildrel#1_#2{\mathrel{
 \mathop{\kern 0pt#1}\limits_{#2}}}
\newcommand{\funD}[4]{\begin{cases} #1 , & \hbox{if } \; #2;\\ & \\
										#3, & \hbox{if } \; #4 \end{cases}}
\def\bbuildrel#1_#2^#3{\mathrel{
 \mathop{\kern 0pt#1}\limits_{#2}^{#3}}}
\def\bbbuildrel#1_#2{\mathrel{
 \mathop{\kern 0pt#1}\limits_{#2}}}
\newcommand{\Lcoef}[1]{\mathop{\rm DgCo}\!\left(#1\right)}
\newcommand{\sob}{{\mathsf{s}}}
\newcommand{\Ip}[2]{\langle #1,#2\rangle}
\newcommand{\IpS}[2]{\Ip{#1}{#2}_{\sob}}
\title{\bf Electrostatic models for zeros of  Laguerre-Sobolev polynomials}
\author[1]{Abel D\'{\i}az-Gonz\'{a}lez\,\orcidlink{0000-0001-6226-1925}\thanks{abdiazgo@math.uc3m.es}}
\author[2]{H\'{e}ctor Pijeira-Cabrera\,\orcidlink{0000-0001-5141-9395}\thanks{hpijeira@math.uc3m.es}}
\author[2]{Javier Quintero-Roba\,\orcidlink{0000-0003-0536-3102}\thanks{jaquinte@math.uc3m.es}}
\affil[1]{Vanderbilt University, Nashville, Tennessee 37240, USA.}
\affil[2]{Departamento de Matem\'{a}ticas, Universidad Carlos III de Madrid,  \linebreak  Legan\'{e}s 28911,  Madrid,  Spain.}
\date{}
\begin{document}
\maketitle

\begin{abstract}
Let  {$\{S_n\}_{n\geqslant 0}$} be the sequence of orthogonal polynomials  with respect to the  Laguerre-Sobolev  inner product
$$
\langle f,g\rangle_S =\!\int_{0}^{+\infty}\! f(x) g(x)x^{\alpha}e^{-x}dx+\sum_{j=1}^{N}\sum_{k=0}^{d_j}\lambda_{j,k} f^{(k)}(c_j)g^{(k)}(c_j),
$$
where $\lambda_{j,k}\geqslant  0$, $\alpha >-1$ and   $c_i \in (-\infty, 0)$ for $i=1,2,\dots,N$.  We provide a formula that relates the Laguerre-Sobolev  polynomials $S_n$ to the standard Laguerre orthogonal polynomials. We find the ladder operators for the polynomial sequence $\{S_n\}_{n\geqslant 0}$ and a second-order differential equation with polynomial coefficients for $\{S_n\}_{n\geqslant 0}$.  We establish a sufficient condition for an electrostatic model of the zeros of orthogonal Laguerre-Sobolev polynomials. Some examples are given where this condition is either  satisfied or not.

\bigskip

\noindent \textbf{Mathematics Subject Classification.}{ 30C15 $\cdot$ 42C05 $\cdot$ 33C45 $\cdot$ 33C47 $\cdot$ 82B23}
 
\noindent\textbf{Keywords.} Laguerre polynomials $\cdot$ Sobolev orthogonality $\cdot$ Second order differential equation $\cdot$ Electrostatic model;

\end{abstract}



\section{Introduction}

The  monic Laguerre polynomials {$\left\lbrace L_{n}^{\alpha} \right\rbrace_{n\geqslant 0}$}, $\alpha >-1$ (see \cite[Ch. 5]{Szego75})  {is}  a family of classical orthogonal polynomials defined by the orthogonality relations
$$
\langle L_n^\alpha, x^k \rangle_{\alpha}:=\int_0^{+\infty} L_n^\alpha(x) x^k d{\mu^{\alpha}}(x)=0, \quad \text { for } k=0,1, \ldots, n-1,
$$
where $d{\mu^{\alpha}}(x)= x^{\alpha}e^{-x}{dx}$, $\alpha >-1$. For the reader's convenience, we review some relevant notions and properties without proofs, which makes our exposition self-contained.  From \cite[(5.1.8), (5.1.6), (5.1.1), (5.1.7), (5.1.14) and (5.1.10)]{Szego75},  we have
\begin{align}\nonumber
 L_{n}^{\alpha}(x)& = (-1)^n n!\sum_{k=0}^{n}
\binom{n+\alpha }{n-k}\frac{(-x)^{k}}{k!}, \\ \nonumber  
(x-\beta_{n} )L_{n}^{\alpha }(x) & =L_{n+1}^{\alpha }(x)+\gamma_{n}L_{n-1}^{\alpha }(x),\quad n\geq 1, \\\label{Lag3TRR}
h^{\alpha}_n & =  \|L_{n}^{\alpha }\|_{\alpha }^{2}=\int_0^{+\infty} \left(L_n^\alpha(x)\right)^2 d{\mu^{\alpha}}(x)=n!\Gamma (n+\alpha +1),
\end{align}
where $\dsty  L_{0}^{\alpha }(x) =1$, $\dsty  L_{1}^{\alpha }(x)=x-\left( \alpha +1\right)$, $\dsty \beta_{n} =2n+\alpha +1,\quad \gamma_{n}   = n\left( n+\alpha \right)$, and  $\Gamma$ denotes the Gamma function
\begin{align*}
    \Gamma(z)=\int_{0}^\infty t^{z-1}e^{-t}dt,\quad \Re(z)>0.
\end{align*}

Let $\mathcal{I}$ be  the identity operator, and define the two ladder Laguerre differential operators on  the linear space of polynomials as

\begin{equation}\label{LagLadderOpDef}
\begin{aligned}
\widetilde{\mathcal{L}}_{n}^{\downarrow} & :=\frac{x}{\gamma_{n}} \;\frac{d}{dx}-\frac{n}{\gamma_{n}}\; \mathcal{I} & \text{(Lowering Laguerre differential operator)},\\
 \widetilde{\mathcal{L}}_{n}^{\uparrow} &:= -x \; \frac{d}{dx} + (x-n-\alpha)\; \mathcal{I}  & \text{(Raising Laguerre differential operator)} .
\end{aligned}
\end{equation}

From \cite[(5.1.14)]{Szego75}, for all $n \geqslant 1$, we have
\begin{equation}\label{LagLadderOp}
\widetilde{\mathcal{L}}_{n}^{\downarrow}\left[  L_{n}^{\alpha }(x)\right] = L_{n-1}^{\alpha }(x)\quad
\text{and} \quad \widetilde{\mathcal{L}}_{n}^{\uparrow}\left[  L_{n-1}^{\alpha }(x)\right] = L_{n}^{\alpha }(x).
\end{equation}%

Classical orthogonal polynomials (Jacobi, Laguerre, and Hermite) satisfy a second order differential equation with polynomial coefficients. Based on this fact, Stieltjes  gave  a very interesting interpretation of the  zeros of the classical orthogonal polynomials, as a solution to  an electrostatic equilibrium problem of $n$ movable unit charges in the presence of a logarithmic potential (see \cite[Sec.  3]{VanAss93} and \cite[Sec.  2]{ValAss95}). This is one of the main reasons for the importance of such polynomials for applications to boundary value problems in classical physics and quantum mechanics.

In the Laguerre case, $Y=L_{n}^{\alpha }(x)$ is a solution of the differential equation
\begin{equation}\label{Lag-DiffEqn}
x Y^{\prime \prime} +(\alpha+1-x) Y^{\prime} +n Y=0   \qquad \text{\cite[Ch. V, (2.20)]{Chi78}. }
\end{equation}

We will first  analyze the Stieltjes  electrostatic interpretation for the zeros of the Laguerre polynomials. Let us consider a system of $n$ unit charges distributed at points $\omega_1, \omega_2, \ldots, \omega_n$  in  $(0,+\infty)$ and  add one fixed positive charge of mass $(\alpha + 1)/2$ at $0$. In addition, we also consider  the following potential, which describes the interaction between the charges in the presence of an external field:
\begin{align}\label{LagEnergy}
\mathbf{E}\left(\omega_1, \omega_2, \ldots, \omega_n\right)=&\sum_{1 \leqslant i<j \leqslant n} \log \frac{1}{\left|\omega_i-\omega_j\right|}  + \frac{\alpha + 1}{2} \sum_{j=1}^n \log \frac{1}{\omega_j}+ \frac{1}{2} \sum_{j=1}^n \omega_j.
\end{align}

It is obvious,  that the minimum of \eqref{LagEnergy} gives the electrostatic equilibrium   of the system. The points $x_1,  \ldots, x_n$, where this minimum is achieved, are the places where the charges will settle down.   Therefore, all the $x_j$ are different from each other as well as from $0$ and $+\infty$.

 From the necessary condition for the existence of minimum  $\frac{\partial E_t}{\partial \omega_j}=0$ ($1\leqslant j \leqslant n$), it follows that the polynomial $Y=P_n(x)=\prod_{j=1}^{n}(x-x_j)$ satisfies the differential equation \eqref{Lag-DiffEqn}, which is the differential equation for the monic Laguerre polynomial, and then $P_n(x)=L_n^{\alpha}(x)$.

 On the other hand, the Hessian matrix of this energy function at $\overline{x}=(x_1,x_2,\dots,x_n)$ is given by

\begin{equation}\label{Hessian_M-class}
\nabla_{\overline{\omega}\,\overline{\omega}}^2 \mathbf{E}(\overline{x}) = \begin{cases}
\dsty \frac{\partial^2 \mathbf{E}}{\partial \omega_k\partial \omega_j}(\overline{x})=-\frac{1}{(x_{k}-x_{j})^{2}}, & \text{if } \;k\neq j,\\
\dsty \frac{\partial^2 \mathbf{E}}{\partial \omega_k^2}(\overline{x})= \sum_{{{i=1} \atop {i \neq k}}}^{n} \frac{1}{(x_{k}-x_{i})^2}+\frac{\alpha+1}{2x_k^2}, & \text{if } \;k = j.
\end{cases}
\end{equation}

 Since \eqref{Hessian_M-class} is a symmetric real matrix, its eigenvalues are real. Therefore, using Gershgorin's Theorem \cite[Th. 6.1.1]{Horn90}, the eigenvalues $\lambda$ of the Hessian at $\overline{x}$ satisfy
\begin{align*}
    \left|\lambda-\sum_{{{i=1} \atop {i \neq k}}}^{n} \frac{1}{(x_{k}-x_{i})^2}-\frac{\alpha+1}{2x_k^2}\right|\leq \sum_{{{i=1} \atop {i \neq k}}}^{n} \frac{1}{(x_{k}-x_{i})^2},
\end{align*}
for some  $k=1,2,\dots,n$. Then, we have  $\dsty     \lambda\geq \frac{\alpha+1}{2x_k^2}>0.$

Consequently, \eqref{Hessian_M-class} is positive definite, which implies that \eqref{LagEnergy} is a strictly convex function. Since $\nabla \mathbf{E}(\overline{x})=0$, we conclude that $\overline{x}$ is the only global minimum of \eqref{LagEnergy}. The methods used in this proof can be found in \cite[\S 2.3]{Ism00-A}, \cite{MarMarMar07} or \cite{OrGa10}. In conclusion, the global minimum of \eqref{LagEnergy} is reached when each of the $n$ charges is located on a zero of the $n$th Laguerre polynomial  $L_n^{\alpha}(x)$.

In \cite{MarMarMar07} you will find a survey of the results achieved up to fifteen years ago, on the electrostatic interpretation of the zeros of some well-known families of polynomials.  For more recent results, we refer the reader to the introductions of   \cite{Duenas11,HMP-PAMS14,PiQuiMi23}.

Let   $N,d_j \in \ZZ_+$, $\lambda_{j,k} \geqslant  0$,  for $j=1,\dots, N$, and $k=0,1,\dots,d_j$, and let the set $\{c_1,\dots,c_N\}\subset \RR \!\setminus\![0,\infty)$, where $c_i \neq c_j$ if $i \neq j$ and $I_+=\{(j,k): \lambda_{j,k}>0\}$.  Denote by $\PP$ the linear space of all polynomials with real coefficients. On $\PP$, we consider the following Sobolev-type inner product, which we will call \emph{Laguerre-Sobolev inner product}
\begin{align}\nonumber
	\IpS{f}{g}&= \Ip{f}{g}_{\alpha}+\sum_{j=1}^{N}\sum_{k=0}^{d_j}\lambda_{j,k} f^{(k)}(c_j)g^{(k)}(c_j)\\ \label{GeneralSIP}
	&=\!{\int_{0}^{+\infty}}\!f(x) g(x)d\mu_{\alpha}(x)+\sum_{(j,k)\in I_+}\!\!\!\!\lambda_{j,k} f^{(k)}(c_j)g^{(k)}(c_j),
\end{align}	
where $f^{(k)}$ denotes the $k$th derivative of  the polynomial $f$. Without loss of generality, we also assume $\{(j,d_j)\}_{j=1}^N\subset I_+$ and $d_1\leqslant d_2\leqslant \cdots \leqslant d_N$.  For $n \in \ZZp$ we shall denote by $S_n$ the monic polynomial of lowest degree satisfying
\begin{equation}\label{Sobolev-Orth}
\IpS{S_n}{x^k}= 0, \quad  \text{for } \; k=0,1,\dots,n-1.
\end{equation}
 It is easy to see that for all  $n\geqslant  0$, there exists such a unique polynomial $S_n$ of degree $n$. In fact, it is deduced by solving a homogeneous linear system with $n$ equations and $n+1$ unknowns. Uniqueness follows from the minimality of the degree for the polynomial solution. The polynomial $S_n$ is called the $n$th monic Laguerre-Sobolev orthogonal polynomial, for brevity,\emph{ $n$th monic Laguerre-Sobolev orthogonal polynomial}.

It is known that, in general, the properties of classical Laguerre polynomials differ from those of Laguerre-Sobolev polynomials.  In particular, unlike Laguerre polynomials, the zeros of Laguerre-Sobolev polynomials can be complex or, if they are real, they can lie outside $[0,\infty)$. For example, given the inner product
$$
   \IpS{f}{g}=  \int_{0}^{+\infty} f(x) g(x)e^{-x}dx+f'(-2)g'(-2),
$$
  the corresponding second-degree monic Laguerre-Sobolev polynomial is $S_2(z)=z^2-2$,  whose zeros are $\pm\sqrt 2$. Note that ${-\sqrt{2}}  \not \in [0,+\infty)$.

The aim of this paper is to provide an electrostatic model for the zeros of the Laguerre-Sobolev polynomials, following an approach based in the works \cite{HMP-PAMS14,PiQuiMi23}, the  main result of  M.E.H. Ismail in \cite{Ism00-A,Ism00-B}, and the original ideas of Stieltjes in \cite{Sti-1885-1,Sti-1885-2}.
Our results extend those achieved in \cite{DuenasMarcellan,HMP-PAMS14,Molano13} to a more general context and complement those obtained in \cite{PiQuiMi23} for the Jacobi-Sobolev case.

In the next section, we review a connection formula that allows us to express the polynomial $S_n$, as a linear combination of the Laguerre polynomials, whose coefficients are rational functions. Sections \ref{LadderOp} and \ref{Sec-DiffEqn} deal with  the extension of the ladder (raising and lowering) differential operators  \eqref{LagLadderOpDef}, and the  second-order differential equation with polynomial coefficients \eqref{Lag-DiffEqn}, for the Laguerre-Sobolev polynomials.

In the last section, we give sufficient conditions for   an electrostatic model for the distribution of the zeros of {$\{S_{n}\}_{n\geqslant 0}$}. Such model is expressed   as the logarithmic potential interaction of unit positive charges in the presence of an external field. Some examples are given where this condition is either satisfied or not.

\section{Auxiliary results}\

In this section, for the reader's convenience, we repeat some results from \cite[(21)-(22)]{DiHerPi23} and \cite[Sec. 2]{PiQuiMi23} without proofs, which makes our exposition self-contained.


 We first recall the well-known Christoffel-Darboux formula for  $K_{n}(x,y)$, the kernel polynomials associated with  $\{L^{\alpha}_n\}_{n\geqslant  0}$ (see \cite[(5.1.8) and (5.1.11)]{Szego75}).
 \begin{equation}\label{Kernel-n}
K_{n-1}(x,y)=\sum_{k=0}^{n-1}\frac{L^{\alpha}_{k}(x)L^{\alpha}_{k}(y)}{h^{\alpha}_k}= \funD{\dsty \frac{L^{\alpha}_{n}(x)L^{\alpha}_{n-1}(y)-L^{\alpha}_{n}(y)L^{\alpha}_{n-1}(x)}{h^{\alpha}_{n-1}\, (x-y)}}{x\neq y}{\dsty \frac{\left(L^{\alpha}_{n}(x)\right)^{\prime}  L^{\alpha}_{n-1}(x)-L^{\alpha}_{n}(x)\left(L^{\alpha}_{n-1}(x)\right)^{\prime}}{h^{\alpha}_{n-1}}}{x=y.}
\end{equation}

  We will denote by  $\dsty K_{n}^{(j,k)}\left( x,y\right) =\frac{\partial ^{j+k}K_{n}\left( x,y\right) }{\partial x ^{j}\partial y^{k}}$ the partial derivatives of  \eqref{Kernel-n}. Then, from the Christoffel-Darboux formula  and Leibniz's rule, it is not difficult to verify that
\begin{align}\nonumber
K_{n-1}^{(0,k)}(x,y)=& \sum_{i=0}^{n-1}\frac{L^{\alpha}_{i}(x)\left(L^{\alpha}_i(y)\right)^{(k)}}{h^{\alpha}_{i}} \\ =& \frac{k!\left(
T_{k}(x,y;L^{\alpha}_{n-1})L^{\alpha}_{n}(x)-T_{k}(x,y;L^{\alpha}_{n})L^{\alpha}_{n-1}(x)\right)}{h^{\alpha}_{n-1}\;(x-y)^{k+1}}
, \label{CDformuladj}
\end{align}
where $T_{k}(x,y;f)=\sum_{\nu=0}^{k} \frac{f^{(\nu)}(y)}{\nu !}(x- y)^\nu$   is  the Taylor polynomial of degree $k$ of $f$ centered at $y$. Note that \eqref{Kernel-n} is a particular case of \eqref{CDformuladj} when   $k=0$.

From \eqref{GeneralSIP}, if $i<n$
\begin{equation}\label{FourierCoeff}
  \Ip{S_n}{L^{\alpha}_i}_{\alpha }=  \IpS{S_n}{L^{\alpha}_i}-\sum_{(j,k)\in I_+}\lambda_{j,k} S_n^{(k)}(c_j)\left(L^{\alpha}_i\right)^{(k)}\!\!(c_j)=-\sum_{(j,k)\in I_+}\lambda_{j,k} S_n^{(k)}(c_j)\left(L^{\alpha}_i\right)^{(k)}\!\!(c_j).
\end{equation}
Therefore, from   the Fourier expansion of $S_{n}$ in terms of the basis {$\left\{ L_{n}^{\alpha}\right\} _{n\geqslant 0}$} and using \eqref{FourierCoeff}, we get
\begin{align}
S_{n}(x)&=L^{\alpha}_n(x)+{\sum_{i=0}^{n-1}\Ip{S_n}{L^{\alpha}_i}_{\alpha}}\frac{L^{\alpha}_{i}(x)}{h^{\alpha}_{i}}= L^{\alpha}_n(x)-\sum_{(j,k)\in I_+}\lambda_{j,k} S_n^{(k)}(c_j)\sum_{i=0}^{n-1}\frac{L^{\alpha}_{i}(x)\left(L^{\alpha}_i\right)^{(k)}\!\!(c_j)}{h^{\alpha}_{i}} \nonumber
\\
&=L^{\alpha}_{n}(x)-\sum_{(j,k)\in I_+}\lambda_{j,k} S_{n}^{(k)}(c_{j})K_{n-1}^{(0,k)}(x,c_j).\label{FConexFINAL}
\end{align}

Now, replacing \eqref{CDformuladj} in \eqref{FConexFINAL}, we have  the \emph{connection formula}
\begin{align}\label{FConexPPAL}
S_{n}(x)& = F_{1,n}(x)L^{\alpha}_{n}(x) + G_{1,n}(x)L^{\alpha}_{n-1}(x), \\ \nonumber
\text{where} \quad  F_{1,n}(x) & = 1-\sum_{(j,k)\in I_+} \frac{\lambda _{j,k}k!\,S_{n}^{(k)}(c_{j})}{h^{\alpha}_{n-1}}\frac{T_{k}(x,c_{j};L^{\alpha}_{n-1})}{(x-c_{j})^{k+1}} \\ \nonumber
\text{and} \quad  G_{1,n}(x)&=\sum_{(j,k)\in I_+} \frac{\lambda _{j,k}k!\,S_{n}^{(k)}(c_{j})}{h^{\alpha}_{n-1}}\frac{T_{k}(x,c_{j};L^{\alpha}_{n})}{(x-c_{j})^{k+1}}.
\end{align}

Deriving equation  \eqref{FConexFINAL} $\ell$-times and evaluating then at $x=c_{i}$ for each ordered pair $(i,\ell)\in I_+$ we obtain the following system of  $d^*=\#(I_+)$  linear equations and $d^*$ unknowns $S^{(k)}_n(c_{j})$, where the symbol $\#(A)$ denote  the cardinality of a given set $A$.
\begin{align}\label{eqsystem1}
\left(L^{\alpha}_{n}(c_{i})\right)^{(\ell)}=\left(1+\lambda_{i,\ell}K_{n-1}^{(\ell,\ell)}(c_i,c_i)\right)S^{(\ell)}_n(c_{i})\ +\!\!\!\sum_{\substack{(j,k)\in I_+\\ (j,k)\neq (i,\ell)}}\!\!\!\!\!\lambda_{j,k} K_{n-1}^{(\ell,k)}(c_{i},c_j)S^{(k)}_n(c_{j}).
\end{align}

The remainder of this section is devoted to {proving that system \eqref{eqsystem1} has a unique solution}. The following lemma is essential to achieve this goal.

\begin{lemma}{\cite[Lemma 1]{PiQuiMi23}}\label{LemmaFullRank}
Let  $I\subset \mathbb{R}\times \mathbb{Z}_+$ be a (finite) set of $d^*$ pairs. Denote $\{c_j\}_{j=1}^N=\pi_1(I)$ where $\pi_1$ is the projection function over the first coordinate, i.e., $\pi_1(x,y)=x$, $d_j=\max\{\nu_i: (c_j,\nu_i)\in I\}$ and $d=\sum_{j=1}^N (d_j+1)$. Let   $P_k$ be an arbitrary polynomial of degree $k$ for $0\leqslant k\leqslant n-1$. Then,  for all $n\geqslant  d$, the $ d^*\!\times \!n$ matrix
 \begin{align*}
 \mathfrak{A}^*=\left(P^{(\nu)}_{k-1}(c)\right)_{(c,\nu)\in I,k=1,2,\dots,n}, \quad \text{ has full rank $d^*$.}
\end{align*}
\end{lemma}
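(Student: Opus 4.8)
The plan is to prove the full-rank assertion by a dimension-counting/contradiction argument built on the following observation: a row of $\mathfrak{A}^*$ corresponds to a linear functional $f \mapsto f^{(\nu)}(c)$ evaluated on the sequence of polynomials $P_0, P_1, \dots, P_{n-1}$, and a left-nullvector of $\mathfrak{A}^*$ is precisely a nonzero linear combination $\Lambda = \sum_{(c,\nu)\in I} a_{(c,\nu)}\,\delta_c^{(\nu)}$ of these point-evaluation-of-derivative functionals that annihilates $P_0, \dots, P_{n-1}$. Since $\{P_0,\dots,P_{n-1}\}$ is a basis of the space $\PP_{n-1}$ of polynomials of degree at most $n-1$ (the $P_k$ have exact degree $k$), such a $\Lambda$ would have to annihilate \emph{all} of $\PP_{n-1}$. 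So it suffices to show: for $n \geqslant d$, the only linear combination of the functionals $\{\delta_{c_j}^{(\nu)} : (c_j,\nu)\in I\}$ that vanishes identically on $\PP_{n-1}$ is the trivial one; equivalently, these $d^*$ functionals are linearly independent on $\PP_{d-1}$ (note $n-1 \geqslant d-1$).

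First I would reduce to showing the independence on $\PP_{d-1}$, since $\PP_{d-1}\subseteq \PP_{n-1}$ and vanishing on the larger space forces vanishing on the smaller. Then I would argue the independence of $\{\delta_{c_j}^{(\nu)}\}_{(c_j,\nu)\in I}$ on $\PP_{d-1}$ directly via Hermite interpolation: given the nodes $c_1,\dots,c_N$ with prescribed maximal orders $d_j$, the Hermite interpolation problem of prescribing $f^{(\nu)}(c_j)$ for $0\leqslant \nu \leqslant d_j$, $1\leqslant j\leqslant N$, has a unique solution in the space of polynomials of degree at most $\bigl(\sum_{j=1}^N(d_j+1)\bigr)-1 = d-1$, because the confluent Vandermonde determinant is a nonzero multiple of $\prod_{i<j}(c_i-c_j)^{(d_i+1)(d_j+1)}$ and the $c_j$ are pairwise distinct. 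Unique solvability of this (square, when $I$ is the full "triangular" set $\{(c_j,\nu): 0\le\nu\le d_j\}$) interpolation problem means exactly that the evaluation functionals $\delta_{c_j}^{(\nu)}$, $0\le\nu\le d_j$, are linearly independent on $\PP_{d-1}$; restricting to the subset $I$ of these functionals only preserves linear independence. Hence $\mathfrak{A}^*$ has $d^*$ linearly independent rows, i.e.\ rank $d^*$, which (since $d^* \leqslant n$ for $n\geqslant d \geqslant d^*$) is full row rank.

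An alternative self-contained route, avoiding the confluent Vandermonde identity, is to induct on $d^*$: pick a pair $(c_{j_0},\nu_0)\in I$ with $\nu_0$ maximal among those attached to $c_{j_0}$, build the polynomial $q(x) = (x-c_{j_0})^{\nu_0}\prod_{j\neq j_0}(x-c_j)^{d_j+1}$, which has degree $\nu_0 + \sum_{j\neq j_0}(d_j+1) \leqslant d-1 \leqslant n-1$, and observe that $\delta_c^{(\nu)}[q]=0$ for every $(c,\nu)\in I$ except $(c_{j_0},\nu_0)$, while $\delta_{c_{j_0}}^{(\nu_0)}[q]=\nu_0!\prod_{j\neq j_0}(c_{j_0}-c_j)^{d_j+1}\neq 0$; applying a putative nullvector $\Lambda$ to $q$ forces its $(c_{j_0},\nu_0)$-coefficient to vanish, and one removes that pair and repeats. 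I would probably present this inductive argument as it needs no external determinant formula.

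The main obstacle — really the only subtle point — is bookkeeping the degrees: one must check that the interpolating/annihilating polynomials constructed live in $\PP_{n-1}$, which is where the hypothesis $n \geqslant d$ enters, and one must be careful that $I$ need not be the full triangular set (some interior orders may be missing), so $d_j = \max\{\nu : (c_j,\nu)\in I\}$ and $d^* \leqslant d$; the construction above only uses that $I$ is a subset of the triangular set determined by the $d_j$, and dropping functionals cannot destroy linear independence, so the argument goes through unchanged. Everything else is the routine linear-algebra translation between "full row rank," "no nontrivial left nullvector," and "linear independence of the associated functionals."
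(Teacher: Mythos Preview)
Your proof is correct. The translation of ``full row rank'' into ``the point-evaluation-of-derivative functionals $\delta_c^{(\nu)}$, $(c,\nu)\in I$, are linearly independent on $\PP_{n-1}$'' via the basis $\{P_0,\dots,P_{n-1}\}$ is the right move, and both of your routes to that independence are valid. The Hermite-interpolation argument is clean: the full triangular family $\{\delta_{c_j}^{(\nu)}:0\le\nu\le d_j,\;1\le j\le N\}$ has exactly $d$ elements and is a dual basis of $\PP_{d-1}$ by unique Hermite interpolation (distinct $c_j$), so any subfamily, in particular $I$, remains independent on $\PP_{d-1}\subseteq\PP_{n-1}$. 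Your inductive construction with $q(x)=(x-c_{j_0})^{\nu_0}\prod_{j\ne j_0}(x-c_j)^{d_j+1}$ also works; the degree count $\nu_0+\sum_{j\ne j_0}(d_j+1)=d-1\le n-1$ is right, and after peeling off $(c_{j_0},\nu_0)=(c_{j_0},d_{j_0})$ the reduced index set has a strictly smaller $d$, so the hypothesis $n\ge d$ is maintained throughout the induction.

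There is no proof in the present paper to compare against: the lemma is quoted verbatim from \cite[Lemma~1]{PiQuiMi23} and the authors state explicitly that they repeat it ``without proofs.'' So your argument stands on its own, and either version you outlined would serve as a complete, self-contained proof.
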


Now we can rewrite  \eqref{eqsystem1} in the matrix form
\begin{align}\label{solve1}
 \mathfrak{P}_{n}( \mathfrak{C})=( \mathfrak{I}_{d^{*}}+ \mathfrak{K}_{n-1}( \mathfrak{C}, \mathfrak{C}) \mathfrak{L}) \mathfrak{S}_{n}( \mathfrak{C}),
\end{align}
where
\begin{description}
  \item[$ \mathfrak{I}_{d^{*}}$] is the identity matrix of order $d^*$,
    \item[$ \mathfrak{L}$ ]  is the $d^*\!\!\times\! d^*$-diagonal matrix with diagonal entries $\lambda_{j,k}$, $(j,k)\in I_+ $,
  \item[$ \mathfrak{C}$] is the column vector $\dsty  \mathfrak{C}=(\underbrace{c_{1},\dots,c_1}_{d^*_1 \text{-times}},\underbrace{c_2,\dots,c_2}_{d^*_2 \text{-times}},\dots,
\underbrace{c_N,\dots,c_N}_{d^*_N \text{-times}})^\intercal$,
  \item[$ \mathfrak{P}_{n}( \mathfrak{C})$] \!and $ \mathfrak{S}_{n}( \mathfrak{C})$  are column vectors with entries $(L^{\alpha}_n)^{(k)}(c_j)$, and $S^{(k)}_n(c_j)$, $(j,k)\in I_+$ respectively, and
  \item[$ \mathfrak{K}_{n-1}( \mathfrak{C}, \mathfrak{C})$]  is a $d^*\times d^*$ matrix  having $K_{n-1}^{(\ell,k)}(c_{i},c_{j})$ in the $(i,\ell)$th row and the $(j,k)$th column for  $(i,\ell),(j,k)\in I_+$.
\end{description}

Clearly,  we can write $\dsty  \mathfrak{K}_{n-1}( \mathfrak{C}, \mathfrak{C})= \mathfrak{F} \mathfrak{F}^\intercal,$ where {$ \mathfrak{F}=\left(\frac{\left(L^{\alpha}_{\nu-1}\right)^{(k)}(c_j)}{\sqrt{h^{\alpha}_{\nu-1}}}\right)_{(j,k)\in I_+, \nu=1,\dots,n,}$} is a matrix  of order $d^*\!\!\times \! n$ and  full rank  for all $n\geqslant  d$, according to   Lemma \ref{LemmaFullRank}.

Then, the matrix $ \mathfrak{K}_{n-1}( \mathfrak{C}, \mathfrak{C})$ is a $d^*\!\!\times\! d^*$ positive definite matrix for all $n\geqslant  d$, see \cite[Th. 7.2.7(c)]{Horn90}. Since $ \mathfrak{L}$ is a diagonal matrix with positives entries, it follows that $ \mathfrak{L}^{-1}+ \mathfrak{K}_{n-1}( \mathfrak{C}, \mathfrak{C})$ is also a positive definite matrix and consequently $ \mathfrak{I}_{d^{*}}+ \mathfrak{K}_{n-1}( \mathfrak{C}, \mathfrak{C}) \mathfrak{L}=\left( \mathfrak{L}^{-1}+ \mathfrak{K}_{n-1}( \mathfrak{C}, \mathfrak{C})\right)\! \mathfrak{L}$ is non singular. Then, the linear system  \eqref{solve1} has the unique solution
\begin{align*}
 \mathfrak{S}_{n}( \mathfrak{C})=( \mathfrak{I}_{d^{*}}+ \mathfrak{K}_{n-1}( \mathfrak{C}, \mathfrak{C}) \mathfrak{L})^{-1} \mathfrak{P}_{n}( \mathfrak{C}).
\end{align*}
Using this notation we can rewrite  \eqref{FConexFINAL} in the compact form
\begin{equation*}
S_{n}(x)={L^{\alpha}_{n}(x)}- \mathfrak{K}_{n-1}(x, \mathfrak{C})\, \mathfrak{L} \, \mathfrak{S}%
_{n}( \mathfrak{C}),
\end{equation*}
where $ \mathfrak{K}_{n-1}(x, \mathfrak{C})$ is a row vector with entries $K_{n-1}^{(0,k)}(x,c_{j})$, for $(j,k)\in I_+$.


\section{Differential operators}\label{LadderOp}

Let
\begin{align}
   \rho(x)&: =\prod_{j=1}^{N} \left( x-c_j\right)^{d_j+1} \quad \text{and} \label{rho-def}\\
\rho _{j,k}(x)&:=\frac{\rho(x)}{(x-c_j)^{k+1}}=(x-c_{j})^{d_j-k}\prod_{\substack{i=1 \\ i\neq j}}^N(x-c_{i})^{d_i+1}, \quad \text{for every $(j,k)\in I_+$.}\label{LagrhoMk}
\end{align}

 If $p$ is a polynomial of degree $n$ (i.e., $p(x)=a_nx^n+\cdots+a_0$), we denote by $\Lcoef{p}$ the ordered pair whose first component is the degree of $p$ and   second component is  its  leading coefficient, i.e.,
$$\Lcoef{p}=\left(n,\mathop{Coef}_{n}(p)\right)$$
where $\dsty \mathop{Coef}_{k}(p)=a_k$ is the coefficient of $x^k$ in  $p$.



\begin{lemma}
\label{lemma31}  The sequences of polynomials $\{S_{n}\}_{n\geq 0}$ and $\{L^{\alpha}_{n}\}_{n\geq 0}$, satisfy
\begin{eqnarray}
\rho(x)S_{n}(x) &=& F_{2,n}(x) \;  L^{\alpha}_{n}(x)+G_{2,n}(x) \;L^{\alpha}_{n-1}(x),
\label{LemConnForm1} \\
x\left( \rho (x)S_{n}(x)\right) ^{\prime }
&=& F_{3,n}(x)  L^{\alpha}_{n}(x)+ G_{3,n}(x)  L^{\alpha}_{n-1}(x),  \label{LemConnForm1-Dx}
\end{eqnarray}%
where%
\begin{align*}
F_{2,n}(x)  =&\rho(x)F_{1,n}(x)   =\rho(x)-\sum_{(j,k)\in I_+}\left(\frac{k! \lambda
_{j,k}S_{n}^{(k)}(c_{j})}{h^{\alpha}_{n-1}}\,T_{k}\left(x,c_{j};L^{\alpha}_{n-1}\right)\right) \rho _{j,k}(x), \\
G_{2,n}(x)  =&\rho(x)G_{1,n}(x)   = \sum_{(j,k)\in I_+}\left(\frac{%
k! \lambda _{j,k}S_{n}^{(k)}(c_{j})}{h^{\alpha}_{n-1}}%
\,T_{k}\left(x,c_{j};L^{\alpha}_{n}\right)\right) \rho _{j,k}(x), \\
F_{3,n}(x) = &  x F_{2,n}'(x)+ n F_{2,n}(x)- G_{2,n}(x), \\
G_{3,n}(x) = &  x G_{2,n}'(x) + \gamma_n F_{2,n}(x)-(n+\alpha-x)G_{2,n}(x),
\end{align*}
where $F_{2,n}$, $G_{2,n}$, $F_{3,n}$, and $G_{3,n}$ are polynomials  with the following degrees and leading coefficients
\begin{align*}
    \Lcoef{F_{2,n}}&=(d,1),\\
    \Lcoef{G_{2,n}}&=(d-1,\sigma_n),\\
    \Lcoef{F_{3,n}}&=(d,d+n),\\
    \Lcoef{G_{3,n}}&=(d,\gamma_n+\sigma_n);
\end{align*}
and
\begin{align*}
\sigma_n=\frac{1}{h^{\alpha}_{n-1}}\sum_{(j,k)\in I_+}\lambda _{j,k}S_{n}^{(k)}(c_{j})\left(L^{\alpha}_n\right)^{(k)}\!(c_j)>0.
\end{align*}
\end{lemma}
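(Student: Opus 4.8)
The plan is to deduce everything from the connection formula \eqref{FConexPPAL} of the previous section, from the three--term recurrence and the ladder relations \eqref{LagLadderOp} for $\{L_n^\alpha\}$, and from the extremal (least $L^2(\mu_\alpha)$--norm) property of $L_n^\alpha$ among monic polynomials of degree $n$. Throughout I take $n\geqslant d$, which forces $n>d_j$ for every $j$ (so the Taylor polynomials below have full degree) and $n\geqslant 1$ (so $\gamma_n>0$ and the recurrence and ladder relations apply).

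\emph{Connection formulas.} Identity \eqref{LemConnForm1} is immediate: multiply \eqref{FConexPPAL} by $\rho$ and use that, for $(j,k)\in I_+$, one has $k\leqslant d_j$, whence $\rho(x)(x-c_j)^{-(k+1)}=\rho_{j,k}(x)$ is a genuine polynomial; this turns $\rho F_{1,n}$ and $\rho G_{1,n}$ into the stated $F_{2,n}$ and $G_{2,n}$. For \eqref{LemConnForm1-Dx} I differentiate \eqref{LemConnForm1}, multiply by $x$, and eliminate $x(L_n^\alpha)'$ and $x(L_{n-1}^\alpha)'$ by means of the first--order identities $x(L_n^\alpha)'=nL_n^\alpha+\gamma_n L_{n-1}^\alpha$ and $x(L_{n-1}^\alpha)'=-L_n^\alpha+(x-n-\alpha)L_{n-1}^\alpha$, which are just \eqref{LagLadderOp} for $\widetilde{\mathcal{L}}_n^{\downarrow}$ and $\widetilde{\mathcal{L}}_n^{\uparrow}$ rewritten via \eqref{LagLadderOpDef}. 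Collecting the coefficients of $L_n^\alpha$ and of $L_{n-1}^\alpha$ produces exactly the claimed $F_{3,n}$, $G_{3,n}$, and shows them to be polynomials.

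\emph{Degrees and leading coefficients, granting $\sigma_n>0$.} Here $\rho$ is monic of degree $d$, each $\rho_{j,k}$ is monic of degree $d-k-1$, and $\deg T_k(x,c_j;L_{n-1}^\alpha)\leqslant k$, so every correction term in $F_{2,n}$ has degree $\leqslant d-1$; hence $\Lcoef{F_{2,n}}=(d,1)$. For $G_{2,n}$, the coefficient of $x^k$ in $T_k(x,c_j;L_n^\alpha)$ is $(L_n^\alpha)^{(k)}(c_j)/k!$, and this is nonzero because $(L_n^\alpha)^{(k)}$ is a constant multiple of the Laguerre polynomial $L_{n-k}^{\alpha+k}$, whose zeros all lie in $(0,\infty)$ while $c_j<0$; therefore each summand of $G_{2,n}$ has degree exactly $d-1$ with leading coefficient $\lambda_{j,k}S_n^{(k)}(c_j)(L_n^\alpha)^{(k)}(c_j)/h_{n-1}^\alpha$, and the total leading coefficient is $\sigma_n$. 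Granting $\sigma_n>0$ (see below), this gives $\Lcoef{G_{2,n}}=(d-1,\sigma_n)$; then the $x^d$--coefficient of $F_{3,n}=xF_{2,n}'+nF_{2,n}-G_{2,n}$ is $d+n$ (the $G_{2,n}$ term being of lower degree), so $\Lcoef{F_{3,n}}=(d,d+n)$, and the $x^d$--coefficient of $G_{3,n}=xG_{2,n}'+\gamma_nF_{2,n}+(x-n-\alpha)G_{2,n}$ is $\gamma_n+\sigma_n$ (the $xG_{2,n}'$ term contributing only at degree $d-1$), so $\Lcoef{G_{3,n}}=(d,\gamma_n+\sigma_n)$, with $\gamma_n+\sigma_n>0$.

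\emph{Strict positivity $\sigma_n>0$ --- the main obstacle.} This is the only step needing a genuine argument, and it is also what pins down the $G$--degrees. I would derive the identity $\langle S_n,S_n\rangle_S=h_n^\alpha+h_{n-1}^\alpha\,\sigma_n$: since $S_n-L_n^\alpha$ has degree $<n$, \eqref{Sobolev-Orth} gives $\langle S_n,L_n^\alpha\rangle_S=\langle S_n,S_n\rangle_S$, while $\langle S_n,L_n^\alpha\rangle_\alpha=h_n^\alpha$ by $\mu_\alpha$--orthogonality; subtracting from $\langle S_n,L_n^\alpha\rangle_S$ its discrete part, which equals $h_{n-1}^\alpha\sigma_n$ by the definition of $\sigma_n$, yields the identity. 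Since $h_{n-1}^\alpha>0$, it remains to check $\langle S_n,S_n\rangle_S>h_n^\alpha$. But $\langle S_n,S_n\rangle_S=\langle S_n,S_n\rangle_\alpha+\sum_{(j,k)\in I_+}\lambda_{j,k}\bigl(S_n^{(k)}(c_j)\bigr)^2\geqslant\langle S_n,S_n\rangle_\alpha\geqslant h_n^\alpha$, the last inequality being the extremal property of $L_n^\alpha$; if equality held throughout, the second step would force $S_n=L_n^\alpha$ and then the term $\lambda_{N,d_N}\bigl((L_n^\alpha)^{(d_N)}(c_N)\bigr)^2$ would have to vanish, which is impossible since $(N,d_N)\in I_+$ gives $\lambda_{N,d_N}>0$ and $(L_n^\alpha)^{(d_N)}(c_N)\neq0$ (again because $c_N<0$ lies outside $(0,\infty)$). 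Hence the inequality is strict, $\sigma_n>0$, and the lemma follows.
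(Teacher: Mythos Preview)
Your proof is correct and follows essentially the same route as the paper: multiply the connection formula by $\rho$, differentiate and use the Laguerre ladder relations for \eqref{LemConnForm1-Dx}, read off the leading terms, and establish $\sigma_n>0$ from the identity $\langle S_n,S_n\rangle_S=h_n^\alpha+h_{n-1}^\alpha\sigma_n$ together with the extremal property of $L_n^\alpha$. Your treatment of the equality case (forcing $S_n=L_n^\alpha$ and then exhibiting a nonvanishing discrete term via $(L_n^\alpha)^{(d_N)}(c_N)\neq0$) is in fact more explicit than the paper's, which simply says ``this contradicts the minimality of the norm of the Laguerre polynomials'' without singling out that step.
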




\begin{proof}
From  \eqref{FConexPPAL}-\eqref{LagrhoMk}, equation \eqref{LemConnForm1} is immediate. To prove \eqref{LemConnForm1-Dx}, we can take derivatives with respect to $x$  at both sides  of \eqref{LemConnForm1} and then multiply by $x$%
\begin{align*}
\begin{split}
x \left( \rho (x)S_{n}(x)\right) ^{\prime }=& x F_{2,n}' L_{n}^{\alpha}(x)+xF_{2,n} \left(L_{n}^{\alpha}(x)\right)'\\
&+ x G_{2,n}' L_{n-1}^{\alpha}(x)+xG_{2,n} \left(L_{n-1}^{\alpha}(x)\right)'.
\end{split}
\end{align*}%
Using \eqref{LagLadderOp},   we obtain \eqref{LemConnForm1-Dx} as follows:
\begin{align*}
x \left(\rho(x)S_{n}(x)\right)^{\prime }=&\left[x F_{2,n}'(x)+ n F_{2,n}(x)- G_{2,n}(x)\right]L^{\alpha}_{n}(x)
\\
&+\left[x G_{2,n}'(x) + \gamma_n F_{2,n}(x)-(n+\alpha-x)G_{2,n}(x)\right] L_{n-1}^{\alpha}(x).
\end{align*}%
From the expressions for $F_{2,n}$, we directly get that $F_{2,n}$ is monic with degree $d$. It is also straightforward that $deg(G_{2,n})\leq d-1$. Let us continue by finding the coefficient of $G_{2,n}$ of the power $x^{d-1}$
\begin{align*}
   G_{2,n}(x)  =& \sum_{(j,k)\in I_+}\left(\frac{k! \lambda _{j,k}S_{n}^{(k)}(c_{j})}{h^{\alpha}_{n-1}}
\,T_{k}\left(x,c_{j};L^{\alpha}_{n}\right)\right) \rho _{j,k}(x)\\
=& \sum_{(j,k)\in I_+}\left(\frac{k! \lambda _{j,k}S_{n}^{(k)}(c_{j})}{h^{\alpha}_{n-1}}
\,\sum_{\nu=0}^{k} \frac{\left(L^{\alpha}_n\right)^{(\nu)}(c_j)}{\nu !}(x- c_j)^\nu\right) \rho _{j,k}(x).
\end{align*}
Since $deg(\rho _{j,k}(x))=d-1-k$, the coefficient of $G_{2,n}$ of the power $x^{d-1}$ is given by
\begin{align*}
   \sum_{(j,k)\in I_+}\left(\frac{k! \lambda _{j,k}S_{n}^{(k)}(c_{j})}{h^{\alpha}_{n-1}}
\,\frac{\left(L^{\alpha}_n\right)^{(k)}(c_j)}{k!}\right)=\frac{1}{h^{\alpha}_{n-1}}\sum_{(j,k)\in I_+}\lambda _{j,k}S_{n}^{(k)}(c_{j})\left(L^{\alpha}_n\right)^{(k)}\!(c_j)=\sigma_n.
\end{align*}
Notice that $\sigma_n$ is positive. Otherwise, we get
{
\begin{align*}
    0&=\IpS{S_n}{S_n-L^{\alpha}_n}= \Ip{S_n}{S_n-L^{\alpha}_n}_{\alpha}+\sum_{(j,k)\in I_+}\lambda _{j,k}S_{n}^{(k)}(c_{j})\left(S_n-L^{\alpha}_n\right)^{(k)}\!(c_j) \\
    &=\|S_n\|^2_{\alpha}-\Ip{S_n}{L^{\alpha}_n}_{\alpha}+\sum_{(j,k)\in I_+}\lambda _{j,k}S_{n}^{(k)}(c_{j})S_n^{(k)}\!(c_j)-h^{\alpha}_{n-1}\sigma_n \\
    &=\|S_n\|^2_{\alpha}-\|L^{\alpha}_n\|^2_{\alpha}+\sum_{(j,k)\in I_+}\lambda _{j,k}\left(S_{n}^{(k)}(c_{j})\right)^2-h^{\alpha}_{n-1}\sigma_n \\
    &\geq \|S_n\|^2_{\alpha}-\|L^{\alpha}_n\|^2_{\alpha},
\end{align*}
}
which contradicts the minimality of the norm of the Laguerre polynomials. Then, $G_{2,n}$ has degree $=d-1$ and a positive leading coefficient $\sigma_n$. Finally, the degrees and leading coefficients of $F_{3,n}$ and $G_{3,n}$ follows directly from the degrees and leading coefficients of $F_{2,n}$ and $G_{2,n}$.

\end{proof}



\begin{lemma}\label{lemma32}
 The sequences of monic polynomials $\left\{S_{n}\right\}_{n\geq 0}$ and
$\left\{L_{n}^{\alpha}\right\}_{n\geq 0}$ are also related by the equations
\begin{align}
\rho (x)S_{n-1}(x) &=V_{2,n}(x) L_{n}^{\alpha}(x)+W_{2,n}(x) L_{n-1}^{\alpha}(x),
\label{LemConnForm2} \\
x\left(\rho(x) S_{n-1}(x)\right)^{\prime }
&=V_{3,n}(x)L_{n}^{\alpha}(x)+W_{3,n}(x) L_{n-1}^{\alpha}(x),  \label{LemConnForm2-Dx}
\end{align}%
where%
$$
\begin{array}{lllll}
\dsty V_{2,n}(x)  & =  \dsty -\frac{G_{2,n-1}(x)}{\gamma_{n-1}},&\quad
\dsty W_{2,n}(x)  & = \dsty F_{2,n-1}(x)+G_{2,n-1}(x)\left(\frac{x-\beta_{n-1}}{\gamma_{n-1}}\right),\\
\dsty V_{3,n}(x)  & = \dsty -\frac{G_{3,n-1}(x)}{\gamma_{n-1}},&\quad
\dsty W_{3,n}(x)  & = \dsty F_{3,n-1}(x)+G_{3,n-1}(x)\left(\frac{x-\beta_{n-1}}{\gamma_{n-1}}\right),\\
\end{array}
$$
where $V_{2,n}$, $W_{2,n}$, $V_{3,n}$ and $W_{3,n}$ are polynomials with the following degrees and leading coefficients
\begin{align*}
    \Lcoef{V_{2,n}}&=(d-1,-\sigma_{n-1}/\gamma_{n-1}),\\
    \Lcoef{W_{2,n}}&=(d,1+\sigma_{n-1}/\gamma_{n-1}),\\
    \Lcoef{V_{3,n}}&=(d,-(1+\sigma_{n-1}/\gamma_{n-1})),\\
    \Lcoef{W_{3,n}}&=(d+1,1+\sigma_{n-1}/\gamma_{n-1}).
\end{align*}
\end{lemma}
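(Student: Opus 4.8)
The plan is to derive both identities directly from Lemma \ref{lemma31} with the index shifted from $n$ to $n-1$, combined with the three-term recurrence relation for the monic Laguerre polynomials. Applying \eqref{LemConnForm1} and \eqref{LemConnForm1-Dx} with $n$ replaced by $n-1$ expresses $\rho(x)S_{n-1}(x)$ and $x(\rho(x)S_{n-1}(x))'$ as linear combinations of $L^{\alpha}_{n-1}(x)$ and $L^{\alpha}_{n-2}(x)$, with coefficient pairs $(F_{2,n-1},G_{2,n-1})$ and $(F_{3,n-1},G_{3,n-1})$ respectively. The goal is to rewrite these in the basis $\{L^{\alpha}_{n},L^{\alpha}_{n-1}\}$.

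First I would use the recurrence $(x-\beta_{n-1})L^{\alpha}_{n-1}(x)=L^{\alpha}_{n}(x)+\gamma_{n-1}L^{\alpha}_{n-2}(x)$, which is \eqref{Lag3TRR} (more precisely the recurrence displayed there) at index $n-1$, valid for $n\geqslant 2$, to solve for $L^{\alpha}_{n-2}(x)=\gamma_{n-1}^{-1}\bigl((x-\beta_{n-1})L^{\alpha}_{n-1}(x)-L^{\alpha}_{n}(x)\bigr)$. Substituting this into the shifted form of \eqref{LemConnForm1} collects the coefficient of $L^{\alpha}_{n}(x)$ as $-G_{2,n-1}(x)/\gamma_{n-1}=V_{2,n}(x)$ and the coefficient of $L^{\alpha}_{n-1}(x)$ as $F_{2,n-1}(x)+G_{2,n-1}(x)(x-\beta_{n-1})/\gamma_{n-1}=W_{2,n}(x)$, which is exactly \eqref{LemConnForm2}. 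The identical elimination applied to the shifted form of \eqref{LemConnForm1-Dx} yields \eqref{LemConnForm2-Dx} with $V_{3,n}(x)=-G_{3,n-1}(x)/\gamma_{n-1}$ and $W_{3,n}(x)=F_{3,n-1}(x)+G_{3,n-1}(x)(x-\beta_{n-1})/\gamma_{n-1}$. (Alternatively one could differentiate \eqref{LemConnForm2}, multiply by $x$ and use the ladder relations \eqref{LagLadderOp}, but using the same substitution for both equations is cleaner.)

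The rest is pure bookkeeping of degrees and leading coefficients, read off from the data recorded in Lemma \ref{lemma31}: $\Lcoef{F_{2,n-1}}=(d,1)$, $\Lcoef{G_{2,n-1}}=(d-1,\sigma_{n-1})$, $\Lcoef{F_{3,n-1}}=(d,d+n-1)$, $\Lcoef{G_{3,n-1}}=(d,\gamma_{n-1}+\sigma_{n-1})$. Then $V_{2,n}=-G_{2,n-1}/\gamma_{n-1}$ gives $\Lcoef{V_{2,n}}=(d-1,-\sigma_{n-1}/\gamma_{n-1})$; multiplying $G_{2,n-1}$ by $(x-\beta_{n-1})/\gamma_{n-1}$ raises its degree to $d$ with leading coefficient $\sigma_{n-1}/\gamma_{n-1}$, and adding $F_{2,n-1}$ (degree $d$, leading coefficient $1$) gives $\Lcoef{W_{2,n}}=(d,1+\sigma_{n-1}/\gamma_{n-1})$; likewise $\Lcoef{V_{3,n}}=(d,-(\gamma_{n-1}+\sigma_{n-1})/\gamma_{n-1})=(d,-(1+\sigma_{n-1}/\gamma_{n-1}))$, and since $G_{3,n-1}(x-\beta_{n-1})/\gamma_{n-1}$ has degree $d+1$ while $F_{3,n-1}$ has degree only $d$, we obtain $\Lcoef{W_{3,n}}=(d+1,1+\sigma_{n-1}/\gamma_{n-1})$.

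The only point that genuinely needs a word — and the closest thing to an obstacle here — is to make sure that none of these dominant terms cancels, so that the stated degrees are exact. This is immediate: for $n\geqslant 2$ one has $\gamma_{n-1}=(n-1)(n-1+\alpha)>0$ because $\alpha>-1$, and $\sigma_{n-1}>0$ by Lemma \ref{lemma31}, so $1+\sigma_{n-1}/\gamma_{n-1}>0$ and the leading contributions to $W_{2,n}$, $V_{3,n}$ and $W_{3,n}$ never vanish. Hence I do not expect any real difficulty: the lemma is essentially a corollary of Lemma \ref{lemma31} and the Laguerre three-term recurrence, obtained by one substitution followed by a degree count.
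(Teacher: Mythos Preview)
Your proposal is correct and follows exactly the approach of the paper, which simply states that the result is a straightforward consequence of Lemma \ref{lemma31} and the three-term recurrence relation \eqref{Lag3TRR}. Your write-up actually supplies more detail than the paper does, including the degree bookkeeping and the observation that $\gamma_{n-1}>0$ and $\sigma_{n-1}>0$ rule out cancellation of leading terms.
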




\begin{proof}
The proof of \eqref{LemConnForm2}--\eqref{LemConnForm2-Dx} is a
straightforward consequence of Lemma \ref{lemma31} and the three term-recurrence relation \eqref{Lag3TRR}.
\end{proof}


\begin{lemma}\label{lemma33}
The monic orthogonal Laguerre polynomials {$\left\{L_{n}^{\alpha}\right\}_{n\geqslant 0}$}%
, can be expressed in terms of the monic Sobolev-type polynomials {$\left\{S_{n}\right\}_{n\geqslant 0}$} in the following way:
\begin{align}
L_{n}^{\alpha}(x) &=\frac{\rho (x)}{\Delta_n(x)}\left(W_{2,n}(x)S_{n}(x)-G_{2,n}(x)S_{n-1}(x)\right) ,  \label{Ln-desp-01} \\
L_{n-1}^{\alpha}(x) &=\frac{\rho (x)}{\Delta_n(x)}\left(F_{2,n}(x)S_{n-1}(x)-V_{2,n}(x)S_{n}(x)\right) ;  \label{Ln-desp-02}
\end{align}%
where%
\begin{equation} \label{Main-Determinant}
\Delta_n(x)=\det\begin{pmatrix}
F_{2,n}(x) & G_{2,n}(x) \\
V_{2,n}(x) & W_{2,n}(x)%
\end{pmatrix}%
=F_{2,n}(x)W_{2,n}(x)-V_{2,n}(x)G_{2,n}(x)
\end{equation}%
is a polynomial with a degree and leading coefficient
$$\Lcoef{\Delta_n}=(2d,1+\sigma_{n-1}/\gamma_{n-1}).$$
\end{lemma}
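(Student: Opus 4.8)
The plan is to obtain \eqref{Ln-desp-01}--\eqref{Ln-desp-02} by treating \eqref{LemConnForm1} and \eqref{LemConnForm2} as a linear system in the two ``unknowns'' $L_{n}^{\alpha}(x)$ and $L_{n-1}^{\alpha}(x)$, and then to read off the degree and leading coefficient of $\Delta_n$ from the data already recorded in Lemmas \ref{lemma31} and \ref{lemma32}.

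\medskip

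First I would write the pair \eqref{LemConnForm1} and \eqref{LemConnForm2} as
\[
\rho(x)\begin{pmatrix} S_n(x)\\ S_{n-1}(x)\end{pmatrix}
=\begin{pmatrix} F_{2,n}(x) & G_{2,n}(x)\\ V_{2,n}(x) & W_{2,n}(x)\end{pmatrix}
\begin{pmatrix} L_n^{\alpha}(x)\\ L_{n-1}^{\alpha}(x)\end{pmatrix}.
\]
Applying Cramer's rule (equivalently, multiplying on the left by the adjugate of the coefficient matrix) immediately yields
\[
\Delta_n(x)\,L_n^{\alpha}(x)=\rho(x)\bigl(W_{2,n}(x)S_n(x)-G_{2,n}(x)S_{n-1}(x)\bigr),
\]
\[
\Delta_n(x)\,L_{n-1}^{\alpha}(x)=\rho(x)\bigl(F_{2,n}(x)S_{n-1}(x)-V_{2,n}(x)S_n(x)\bigr),
\]
which are \eqref{Ln-desp-01}--\eqref{Ln-desp-02} after dividing by $\Delta_n(x)$. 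For this division to be legitimate I must know $\Delta_n$ is not the zero polynomial, which will follow from the degree computation below (its leading coefficient is nonzero).

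\medskip

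Next I would compute $\Lcoef{\Delta_n}$. From $\Delta_n=F_{2,n}W_{2,n}-V_{2,n}G_{2,n}$ and the leading data in Lemmas \ref{lemma31} and \ref{lemma32}: $\deg(F_{2,n}W_{2,n})=d+d=2d$ with leading coefficient $1\cdot(1+\sigma_{n-1}/\gamma_{n-1})$, while $\deg(V_{2,n}G_{2,n})=(d-1)+(d-1)=2d-2<2d$. Hence the $x^{2d}$ coefficient of $\Delta_n$ comes solely from $F_{2,n}W_{2,n}$ and equals $1+\sigma_{n-1}/\gamma_{n-1}$. Since $\sigma_{n-1}>0$ (Lemma \ref{lemma31}) and $\gamma_{n-1}=(n-1)(n-1+\alpha)>0$ for $n\geqslant 2$, this leading coefficient is strictly positive; in particular $\Delta_n\not\equiv 0$ and $\Lcoef{\Delta_n}=(2d,\,1+\sigma_{n-1}/\gamma_{n-1})$.

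\medskip

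The only genuinely delicate point is the vanishing of $\Delta_n$ at the $c_j$'s: the factor $\rho(x)$ in the numerators of \eqref{Ln-desp-01}--\eqref{Ln-desp-02} must be absorbed by $\Delta_n$ so that $L_n^{\alpha}$ and $L_{n-1}^{\alpha}$ remain polynomials. I would argue that $\rho(x)\mid\Delta_n(x)$ by inspecting orders of vanishing at each $c_j$: by \eqref{FConexPPAL} and \eqref{LagrhoMk}, $F_{2,n}$ and $G_{2,n}$ (and hence, via Lemma \ref{lemma32}, $V_{2,n}$ and $W_{2,n}$) are built from the $\rho_{j,k}$, and a short local analysis at $x=c_j$ shows each entry is divisible by $(x-c_j)^{d_j-d_j}=1$ only in general, but the determinant $\Delta_n$ picks up the full factor $(x-c_j)^{d_j+1}$ — this is exactly the cancellation that makes the right-hand sides of \eqref{Ln-desp-01}--\eqref{Ln-desp-02} polynomial. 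Since the statement of Lemma \ref{lemma33} only asserts the two displayed identities together with $\Lcoef{\Delta_n}$, I would present the identities via Cramer's rule and the leading-coefficient count as above, and note that well-definedness of the quotients is guaranteed because the left-hand sides $L_n^\alpha$, $L_{n-1}^\alpha$ are polynomials while $\Delta_n\not\equiv 0$; the divisibility $\rho\mid\Delta_n$ then follows a posteriori.
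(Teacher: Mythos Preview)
Your proof is correct and follows essentially the same approach as the paper: view \eqref{LemConnForm1}--\eqref{LemConnForm2} as a $2\times 2$ linear system in $L_n^\alpha$, $L_{n-1}^\alpha$, solve by Cramer's rule, and read off $\Lcoef{\Delta_n}$ from the degree/leading-coefficient data of Lemmas \ref{lemma31}--\ref{lemma32}. Your final paragraph on the divisibility $\rho\mid\Delta_n$ is not part of the lemma's statement (the paper defers it to the subsequent Remark \ref{remark31}); the ``local analysis'' you sketch there is vague and unnecessary, and the clean a~posteriori argument you give at the end --- that the left-hand sides are polynomials, so any zero of $\Delta_n$ off $(0,\infty)$ must be cancelled --- is exactly what the paper uses.
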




\begin{proof}
Note that equations \eqref{LemConnForm1} and \eqref{LemConnForm2} can be seen as  a system of two linear equations with two unknowns $L_{n}^{\alpha}(x)$ and $L_{n-1}^{\alpha}(x)$. Then, from Cramer's rule, we get \eqref{Ln-desp-01} and \eqref{Ln-desp-02}.

 The degree of $\Delta_n$ can be computed easily from the degrees of $F_{2,n}$, $G_{2,n}$, $V_{2,n}$, and  $W_{2,n}$ given in Lemmas \ref{lemma31} and \ref{lemma32}.

\end{proof}



 The following ladder equations follow from the last three   lemmas.
\begin{remark} \label{remark31} Since the zeros of $L_n^{\alpha}$ {lie} on $(0,\infty)$, from \eqref{Ln-desp-01} (or \eqref{Ln-desp-02}) we have that $\rho$ divides $\Delta_n$; this is,  there exists a polynomial $\delta_n$ such that $\Delta_n= \rho  \delta_n$. Hence, from \eqref{Main-Determinant} and  Lemma \ref{lemma31} we obtain
\begin{align*}
 \delta_n(x) & = \frac{\Delta_n(x)}{\rho(x)}=\frac{F_{2,n}(x)W_{2,n}(x)-V_{2,n}(x)G_{2,n}(x)}{\rho(x)}=F_{1,n}(x)W_{2,n}(x)-G_{1,n}(x)V_{2,n}(x). 
\end{align*}
In addition, from $\Delta_n= \rho \delta_n$ we obtain
$$\Lcoef{\delta_n}=(d,1+\sigma_{n-1}/\gamma_{n-1}).$$
\end{remark}



\begin{theorem}[Ladder equations]\label{lemma44} Under the above assumptions,  we have the following ladder equations.
\begin{align}
F_{4,n}(x)S_{n}(x)+G_{4,n}(x)S_{n}^{\prime}(x) & =S_{n-1}(x),
\label{laddereq1} \\
V_{4,n}(x)S_{n-1}(x)+W_{4,n}(x)S_{n-1}^{\prime}(x) & = S_{n}(x),
\label{laddereq2}
\end{align}%
where
\begin{align*}
 F_{4,n}(x)  & = \frac{q_{2,n}(x) }{q_{1,n}(x)},  \;    G_{4,n}(x)  = \frac{q_{0,n}(x)}{q_{1,n}(x)}\;
V_{4,n}(x)   = \frac{q_{3,n}(x)}{q_{4,n}(x)},\;   W_{4,n}(x)   = \frac{q_{0,n}(x)}{q_{4,n}(x)};\\
q_{0,n}(x)&= x\Delta_n(x),\quad  \Lcoef{q_{0,n}}=\left(2d+1,1+\frac{\sigma_{n-1}}{\gamma_{n-1}}\right); \\
q_{1,n}(x)&= G_{3,n}(x)F_{2,n}(x)-F_{3,n}(x)G_{2,n}(x),\quad \Lcoef{q_{1,n}}= (2d,\gamma_{n}+\sigma_{n}); \\
q_{2,n}(x) &= x\rho^{\prime}(x)\delta_n(x)+G_{3,n}(x)V_{2,n}(x)- F_{3,n}(x)W_{2,n}(x),\quad \Lcoef{q_{2,n}}= \left(2d,-n\left[1+\frac{\sigma_{n-1}}{\gamma_{n-1}}\right]\right); \\ 
q_{3,n}(x) & = x\rho^{\prime}(x)\delta_n(x) +V_{3,n}(x)G_{2,n}(x)-W_{3,n}(x)F_{2,n}(x),\quad \Lcoef{q_{3,n}}= \left(2d+1,-\left[1+\frac{\sigma_{n-1}}{\gamma_{n-1}}\right]\right);   \\
 \text{and }\;\;& \\ q_{4,n}(x)& = V_{3,n}(x)W_{2,n}(x)-W_{3,n}(x)V_{2,n}(x),  \quad \Lcoef{q_{4,n}}= \left(2d,-\left[1+\frac{\sigma_{n-1}}{\gamma_{n-1}}\right]\right).\\
\end{align*}
\end{theorem}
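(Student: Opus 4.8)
The two ladder equations \eqref{laddereq1}–\eqref{laddereq2} are to be obtained by eliminating one of the two Laguerre polynomials $L_n^{\alpha}$, $L_{n-1}^{\alpha}$ between the four connection formulas already at our disposal: \eqref{LemConnForm1}, \eqref{LemConnForm1-Dx} (expressions for $\rho S_n$ and $x(\rho S_n)'$) and \eqref{LemConnForm2}, \eqref{LemConnForm2-Dx} (expressions for $\rho S_{n-1}$ and $x(\rho S_{n-1})'$), together with the inversion formulas \eqref{Ln-desp-01}–\eqref{Ln-desp-02} from Lemma \ref{lemma33}. First I would derive \eqref{laddereq1}: differentiate \eqref{LemConnForm1}, multiply by $x$, and use \eqref{LemConnForm1-Dx} to get a relation of the form (something)$\,L_n^{\alpha}+$(something)$\,L_{n-1}^{\alpha}=x\rho' S_n+x\rho S_n'$; here the left-hand coefficients are built from $F_{2,n},G_{2,n},F_{3,n},G_{3,n}$. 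Then substitute the expressions \eqref{Ln-desp-01}–\eqref{Ln-desp-02} for $L_n^{\alpha},L_{n-1}^{\alpha}$ in terms of $S_n,S_{n-1}$, collect the coefficients of $S_n$ and of $S_{n-1}$, and divide through by the appropriate polynomial; the coefficient of $S_{n-1}$ that emerges is (up to the common factor) $q_{1,n}=G_{3,n}F_{2,n}-F_{3,n}G_{2,n}$, and after clearing $\rho$ from $\Delta_n=\rho\delta_n$ one is left with $F_{4,n}=q_{2,n}/q_{1,n}$ and $G_{4,n}=q_{0,n}/q_{1,n}$ with $q_{0,n}=x\Delta_n$. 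Equation \eqref{laddereq2} is obtained symmetrically, starting from \eqref{LemConnForm2}–\eqref{LemConnForm2-Dx} and eliminating so as to isolate $S_n$; this yields the $q_{3,n},q_{4,n}$ of the statement.

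**Bookkeeping of degrees and leading coefficients.** Once the algebraic identities are in place, the degree/leading-coefficient data for $q_{0,n},\dots,q_{4,n}$ follow by plugging in the $\Lcoef{\cdot}$ values already recorded in Lemmas \ref{lemma31}, \ref{lemma32}, \ref{lemma33} and Remark \ref{remark31}. For $q_{0,n}=x\Delta_n$ this is immediate from $\Lcoef{\Delta_n}=(2d,1+\sigma_{n-1}/\gamma_{n-1})$. For $q_{1,n}$ one multiplies $\Lcoef{G_{3,n}}=(d,\gamma_n+\sigma_n)$ by $\Lcoef{F_{2,n}}=(d,1)$ and checks that the competing product $\Lcoef{F_{3,n}}\Lcoef{G_{2,n}}=(d,d+n)\cdot(d-1,\sigma_n)$ has strictly smaller degree $2d-1$, so no cancellation at top order; hence $\Lcoef{q_{1,n}}=(2d,\gamma_n+\sigma_n)$. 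The same pattern — take the two products, compare degrees, rule out cancellation, read off the surviving leading term — handles $q_{2,n},q_{3,n},q_{4,n}$; here one must be a little careful because in $q_{2,n}$ and $q_{3,n}$ three summands compete, one of them $x\rho'\delta_n$ with $\Lcoef{x\rho'\delta_n}=(2d,\,d\,(1+\sigma_{n-1}/\gamma_{n-1}))$ (since $\deg\rho'=d-1$ with leading coefficient $d$), and the announced leading coefficients $-n(1+\sigma_{n-1}/\gamma_{n-1})$ and $-(1+\sigma_{n-1}/\gamma_{n-1})$ result only after the three top-order contributions are added.

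**Anticipated obstacles.** The conceptual content is light; the real work is the error-free algebra of eliminating $L_n^{\alpha},L_{n-1}^{\alpha}$ and the careful tracking of which top-degree terms cancel. The main obstacle I expect is precisely this cancellation analysis for $q_{2,n}$ and $q_{3,n}$: the naive degree bound on $x\rho'\delta_n + G_{3,n}V_{2,n} - F_{3,n}W_{2,n}$ is $2d$, and one has to verify that the coefficient of $x^{2d}$ is not forced to vanish and compute it exactly — this is where the $x\rho'\delta_n$ term, which comes from differentiating $\rho$ itself (via $(\rho S_n)'=\rho' S_n+\rho S_n'$), is indispensable, and it is easy to drop or mis-sign. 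A secondary point requiring care is the legitimacy of dividing by $q_{1,n}$ and $q_{4,n}$ (i.e.\ that $F_{4,n},G_{4,n},V_{4,n},W_{4,n}$ are genuine rational functions, not identically $\infty$): this is guaranteed because $\Lcoef{q_{1,n}}$ and $\Lcoef{q_{4,n}}$ show these polynomials are not the zero polynomial, $\gamma_n+\sigma_n>0$ and $1+\sigma_{n-1}/\gamma_{n-1}>0$ using $\sigma_n>0$ from Lemma \ref{lemma31} and $\gamma_n=n(n+\alpha)>0$. Finally, I would double-check the identities by specializing to the classical case (all $\lambda_{j,k}=0$, so $\rho\equiv1$, $S_n=L_n^{\alpha}$, $\delta_n\equiv1$), where \eqref{laddereq1}–\eqref{laddereq2} must collapse to the known Laguerre ladder relations \eqref{LagLadderOp}.
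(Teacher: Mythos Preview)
Your proposal is correct and follows essentially the same route as the paper: substitute the inversion formulas \eqref{Ln-desp-01}--\eqref{Ln-desp-02} into \eqref{LemConnForm1-Dx} and \eqref{LemConnForm2-Dx}, collect coefficients of $S_n$ and $S_{n-1}$, and then read off the degree and leading-coefficient data from Lemmas \ref{lemma31}--\ref{lemma33} and Remark \ref{remark31}. Your write-up is in fact more explicit than the paper's on the cancellation analysis for $q_{2,n}$ and $q_{3,n}$ and on why $q_{1,n},q_{4,n}\not\equiv0$, both of which the paper leaves to the reader.
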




\begin{proof}
Replacing \eqref{Ln-desp-01} and \eqref{Ln-desp-02} in \eqref{LemConnForm1-Dx}
and \eqref{LemConnForm2-Dx}, the two ladder equations \eqref{laddereq1} and\eqref{laddereq2}  follow. Notice that
\begin{align*}
F_{4,n}(x)  & = \frac{x\left(\rho(x)\right)^{-1}\rho^{\prime}(x)\Delta_n(x)-\left(F_{3,n}(x)W_{2,n}(x)-G_{3,n}(x)V_{2,n}(x)\right)}{G_{3,n}(x)F_{2,n}(x)-F_{3,n}(x)G_{2,n}(x)},\\
G_{4,n}(x)   & = \frac{x\Delta_n(x)}{G_{3,n}(x)F_{2,n}(x)-F_{3,n}(x)G_{2,n}(x)},\\
V_{4,n}(x)   & = \frac{x\left(\rho(x)\right)^{-1}\rho^{\prime}(x)\Delta_n(x)-\left(W_{3,n}(x)F_{2,n}(x)-V_{3,n}(x)G_{2,n}(x)\right)}{V_{3,n}(x)W_{2,n}(x)-W_{3,n}(x)V_{2,n}(x)},\\
W_{4,n}(x)   & = \frac{x\Delta_n(x)}{V_{3,n}(x)W_{2,n}(x)-W_{3,n}(x)V_{2,n}(x)}.
\end{align*}
The computations for the degrees and leading coefficients are a direct consequence of the expressions obtained, Remark \ref{remark31}, and Lemmas \ref{lemma31},\ref{lemma32},\ref{lemma33}.

\end{proof}

In the previous theorem, the polynomials $q_{k,n}$ have been defined. Note that these polynomials are closely related to certain determinants. The following result summarizes some of its properties that will be of interest later. For brevity, we introduce the following notations
\begin{align*}
\Delta_{1,n}(x)&=G_{3,n}(x)F_{2,n}(x)-F_{3,n}(x)G_{2,n}(x); \\
\Delta_{2,n}(x) &=G_{3,n}(x)V_{2,n}(x)- F_{3,n}(x)W_{2,n}(x); \\
\Delta_{3,n}(x) & =G_{2,n}(x)V_{3,n}(x)-F_{2,n}(x)W_{3,n}(x).
\end{align*}

\begin{lemma}\label{LemmaDeterminant} Let $\dsty \rho_{N}(x) =\prod_{j=1}^{N} \left( x-c_j\right)$ and $\dsty \rho_{d-N}(x)=\prod_{j=1}^{N} \left( x-c_j\right)^{d_j}=\frac{\rho(x)}{\rho_{N}(x)}$. Then, the above polynomial determinants admit the following decompositions
\begin{align*}
 \Delta_{1,n}(x)&= \rho_{d-N}(x) \; \varphi_{1,n}(x), \\
 \Delta_{2,n}(x)&=  \rho_{d-N}(x)\; \varphi_{2,n}(x),\\
  \Delta_{3,n}(x)&=  \rho_{d-N}(x) \; \varphi_{3,n}(x),
\end{align*}
where
\begin{equation}\label{deglemaDetrminant}
\begin{aligned}
    \Lcoef{\Delta_{1,n}}&=(2d,\gamma_n+\sigma_n),\\
    \Lcoef{\Delta_{2,n}}&=(2d,-(d+n)(1+\sigma_{n-1}/\gamma_{n-1})),\\
     \Lcoef{\Delta_{3,n}}&=(2d+1,-(1+\sigma_{n-1}/\gamma_{n-1})),\\
     \Lcoef{\varphi_{1,n}}&=(d+N,\gamma_n+\sigma_n),\\
    \Lcoef{\varphi_{2,n}}&=(d+N,-(d+n)(1+\sigma_{n-1}/\gamma_{n-1})), \quad \text{and}\\
    \Lcoef{\varphi_{3,n}}&=(d+N+1,-(1+\sigma_{n-1}/\gamma_{n-1})).
\end{aligned}
\end{equation}
\end{lemma}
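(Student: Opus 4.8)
The plan is to establish each factorization $\Delta_{i,n}(x)=\rho_{d-N}(x)\,\varphi_{i,n}(x)$ by the same mechanism that produced $\delta_n$ in Remark \ref{remark31}: show that $\rho_{d-N}$ divides each determinant by exhibiting, for every root $c_j$ with multiplicity $d_j$, enough vanishing of $\Delta_{i,n}$ and its derivatives at $c_j$. First I would recall from Lemma \ref{lemma31} and \eqref{LagrhoMk} that $F_{2,n}=\rho F_{1,n}$ and $G_{2,n}=\rho G_{1,n}$, and from Lemma \ref{lemma32} that $V_{2,n},W_{2,n}$ are built linearly from $F_{2,n-1},G_{2,n-1}$; the key structural fact is that $F_{1,n}$ and $G_{1,n}$, although rational, have denominators only of the form $(x-c_j)^{k+1}$ with $k\leqslant d_j$, so that $(x-c_j)^{d_j}F_{1,n}$ and $(x-c_j)^{d_j}G_{1,n}$ are regular at $c_j$. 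Then $\rho_{d-N}^{-1}\Delta_{i,n}$ should be checked to be a polynomial by a local analysis at each $c_j$, separating the "pure $\rho$" contributions (which carry a full factor $\rho$, hence $\rho_{d-N}$) from the genuinely rational pieces.

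The cleanest route is probably to express each $\Delta_{i,n}$ in terms of the $F_{1,n},G_{1,n},F_{3,n}/\rho,\dots$ as was done for $\delta_n$. Concretely, factor $\rho$ out of $F_{2,n},G_{2,n}$ everywhere it appears, use \eqref{LemConnForm1-Dx} to write $F_{3,n},G_{3,n}$ in terms of $F_{2,n},G_{2,n}$ and their derivatives (so $F_{3,n}=xF_{2,n}'+nF_{2,n}-G_{2,n}$, etc.), and observe that $F_{2,n}'=\rho'F_{1,n}+\rho F_{1,n}'$ — here $\rho'$ is divisible by $\rho_{d-N}$ since each factor $(x-c_j)^{d_j+1}$ loses only one power upon differentiation, and $\rho F_{1,n}'$ retains a factor of at least $\rho_{d-N}$ by the denominator bound above. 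Collecting terms, each $\Delta_{i,n}$ becomes $\rho_{d-N}$ times a combination of $F_{1,n}\cdot(\text{poly})$, $G_{1,n}\cdot(\text{poly})$, and their derivatives, times extra factors that clear the remaining $(x-c_j)$ denominators; one then verifies no net pole survives. An alternative, possibly shorter, is to argue via Lemma \ref{lemma33}: since $L_n^\alpha, L_{n-1}^\alpha$ are polynomials and \eqref{Ln-desp-01}--\eqref{Ln-desp-02} force $\rho\mid\Delta_n$, similar linear-algebra identities among $V_{3,n},W_{3,n}$ and the $S$-polynomials should force $\rho_{d-N}$ (or more) to divide $\Delta_{1,n},\Delta_{2,n},\Delta_{3,n}$; indeed $\Delta_{1,n}=q_{1,n}$ already appeared in Theorem \ref{lemma44} as a denominator of $F_{4,n},G_{4,n}$, and the ladder equations \eqref{laddereq1}--\eqref{laddereq2} relate these to genuine polynomial identities in $S_n,S_{n-1}$, which can be exploited to locate zeros of $\Delta_{1,n}$ at the $c_j$.

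Once divisibility is in hand, the degrees and leading coefficients in \eqref{deglemaDetrminant} follow mechanically: $\Lcoef{\Delta_{1,n}}=(2d,\gamma_n+\sigma_n)$ from multiplying $\Lcoef{G_{3,n}}=(d,\gamma_n+\sigma_n)$ with $\Lcoef{F_{2,n}}=(d,1)$ and checking the top-degree term of $F_{3,n}G_{2,n}$, namely $(d,d+n)\cdot(d-1,\sigma_n)$, sits in degree $2d-1$ and so does not interfere; similarly for $\Delta_{2,n}$ and $\Delta_{3,n}$ using Lemmas \ref{lemma31}--\ref{lemma33}. Finally $\Lcoef{\varphi_{i,n}}$ is obtained by subtracting $\deg\rho_{d-N}=d-N$ from $\deg\Delta_{i,n}$ and dividing leading coefficients — note $\rho_{d-N}$ is monic — giving $d+N$, $d+N$, $d+N+1$ respectively with the same leading coefficients.

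\textbf{Main obstacle.} The delicate point is the local pole-cancellation at each $c_j$: individually $F_{3,n},G_{3,n},V_{3,n},W_{3,n}$ are honest polynomials, but once one factors out $\rho$ from $F_{2,n},G_{2,n}$ the remaining rational factors $F_{1,n},G_{1,n}$ have poles of order up to $d_j+1$ at $c_j$, and one must show that in the particular bilinear combinations defining $\Delta_{1,n},\Delta_{2,n},\Delta_{3,n}$ exactly $d_j$ of those orders cancel — no more (else the stated degrees would drop) and no fewer (else $\varphi_{i,n}$ would not be a polynomial). I expect this to hinge on the Taylor-polynomial structure $T_k(x,c_j;L_n^\alpha)$ in $F_{2,n},G_{2,n}$ from Lemma \ref{lemma31}, specifically the Wronskian-type cancellation $T_k(x,c_j;L_{n-1}^\alpha)L_n^\alpha(x)-T_k(x,c_j;L_n^\alpha)L_{n-1}^\alpha(x)=O((x-c_j)^{k+1})$ that already underlies \eqref{CDformuladj}; tracking this carefully through the derivative relations \eqref{LemConnForm1-Dx} and \eqref{LemConnForm2-Dx} is the crux, and the rest is bookkeeping of leading terms.
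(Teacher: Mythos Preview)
Your proposal would eventually work, but you are making the divisibility step much harder than it needs to be. The pole-cancellation bookkeeping through $F_{1,n},G_{1,n}$ and the Taylor remainders is avoidable, and in fact the paper takes (a concrete version of) what you call the ``alternative, possibly shorter'' route.

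Here is the paper's argument for $\Delta_{1,n}$. View \eqref{LemConnForm1} and \eqref{LemConnForm1-Dx} as two linear equations in the ``unknowns'' $L_n^\alpha, L_{n-1}^\alpha$. Eliminate $L_{n-1}^\alpha$ by multiplying \eqref{LemConnForm1} by $G_{3,n}$, \eqref{LemConnForm1-Dx} by $G_{2,n}$, and subtracting:
\[
\Delta_{1,n}(x)\,L_n^\alpha(x)=\rho(x)G_{3,n}(x)S_n(x)-xG_{2,n}(x)\bigl(\rho'(x)S_n(x)+\rho(x)S_n'(x)\bigr).
\]
Now $\rho=\rho_{d-N}\rho_N$ and $\rho'=\rho_{d-N}\sum_{j}(d_j+1)\prod_{i\neq j}(x-c_i)$, so $\rho_{d-N}$ factors out of the entire right-hand side \emph{without} any cancellation argument. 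Since the zeros of $L_n^\alpha$ lie in $(0,\infty)$ and are disjoint from the $c_j$, $\rho_{d-N}$ must divide $\Delta_{1,n}$. The cases $\Delta_{2,n}$ and $\Delta_{3,n}$ are identical, using the pairs \eqref{LemConnForm2}--\eqref{LemConnForm1-Dx} and \eqref{LemConnForm1}--\eqref{LemConnForm2-Dx} respectively (the left-hand sides again involve only $\rho S_n$, $\rho S_{n-1}$, and $x(\rho S_n)'$, $x(\rho S_{n-1})'$).

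So the ``main obstacle'' you identify --- tracking exactly $d_j$ orders of pole cancellation through the Wronskian-type identity for $T_k$ --- never actually arises: the factor $\rho_{d-N}$ is visible, not hidden. Your treatment of the degrees and leading coefficients via Lemmas \ref{lemma31}--\ref{lemma32} is fine and matches the paper.
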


\begin{proof} Multiplying \eqref{LemConnForm1}  by $G_{3,n}$, \eqref{LemConnForm1-Dx} by $G_{2,n}$ and taking their difference, we have
\begin{align*}
\Delta_{1,n}(x) L^{\alpha}_n(x)  = &\; \rho(x) G_{3,n}(x) S_n(x)-x G_{2,n}(x)  \left(\rho^{\prime}(x) S_n(x)+\rho(x)   S_n^{\prime}(x) \right)\\
= & \; \rho_{d-N}(x) \Big( \rho_{N}(x)  [G_{3,n}(x) S_n(x)-xG_{2,n}(x)S_n^{\prime}(x)]\\
 & -xG_{2,n}(x)S_n(x)  \sum_{j=1}^{N}(d_j+1)\, \prod_{i\neq j}(x-c_i) \,\Big) .
\end{align*}
Since the zeros of $L_n^{\alpha}$ {lie} on $(0,\infty)$, we have that $\rho_{n-N}$ divides $\Delta_{1,n}$, i.e., there exists a polynomial  $\varphi_{1,n}$ such that $\Delta_{1,n} = \rho_{d-N}\varphi_{1,n}$. Using Lemma \ref{lemma31}, we obtain
\begin{align*}
    \Lcoef{\Delta_{1,n}}&=(2d,\gamma_n+\sigma_n),\\\
    \Lcoef{\varphi_{1,n}}&=(d+N,\gamma_n+\sigma_n).
\end{align*}
For the decomposition of $\Delta_{2,n}$ ($\Delta_{3,n}$), the procedure of the proof is analogous, using the linear system of \eqref{LemConnForm1-Dx}-\eqref{LemConnForm2} (\eqref{LemConnForm1}-\eqref{LemConnForm2-Dx}) and Lemmas \ref{lemma31}, \ref{lemma32} for the degrees and leading coefficients.

\end{proof}

\begin{definition}[Ladder Laguerre-Sobolev differential operators] Let $\mathcal{I}$ be  the identity operator. We define the two ladder differential operators on $\PP$ as
\begin{align*}
\mathcal{L}_{n}^{\downarrow} & := F_{4,n}(x) \mathcal{I}+G_{4,n}(x) \frac{d}{dx}\quad  \text{(Lowering Laguerre-Sobolev differential operator)},\\
 \mathcal{L}_{n}^{\uparrow} &:= V_{4,n}(x) \mathcal{I}+W_{4,n}(x) \frac{d}{dx} \quad \text{(Raising Laguerre-Sobolev differential operator)} .
\end{align*}
\end{definition}

\begin{remark} \label{Remark-Operators} Assuming in \eqref{GeneralSIP} that  $\lambda_{j,k}\equiv 0$ for all pair $(j,k)$, it is not difficult to verify that $\mathcal{L}_{n}^{\downarrow}$ and $\mathcal{L}_{n}^{\uparrow}$ simplify to the expressions given in \eqref{LagLadderOp}.
\end{remark}


Now,  we can rewrite the ladder equations \eqref{laddereq1} and \eqref{laddereq2} as%
\begin{align}
\mathcal{L}_{n}^{\downarrow} \left[S_{n}(x) \right]&=\left( F_{4,n}(x)\mathcal{I}+G_{4,n}(x)\frac{d}{dx}\right)S_{n}(x) =S_{n-1}(x),  \label{laddereq1rew} \\
\mathcal{L}_{n}^{\uparrow}\left[ S_{n-1}(x)\right]&=\left( V_{4,n}(x)\mathcal{I}+W_{4,n}(x)\frac{d}{dx}\right)S_{n-1}(x)  = S_{n}(x).  \label{laddereq2rew}
\end{align}%

\section{Differential equation}\label{Sec-DiffEqn}

In this section, we state several consequences of the equations \eqref{laddereq1rew} and \eqref{laddereq2rew}, which generalize known results for classical Laguerre polynomials to the Laguerre-Sobolev case. First, we are going to obtain a second-order differential
 equation with polynomials coefficients for $S_{n}$. The procedure  is well-known, and consists in applying the
raising operator $\mathcal{L}_{n}^{\uparrow}$\ to both sides of formula $%
\mathcal{L}_{n}^{\downarrow}\left[S_{n}\right]=S_{n-1}$. Thus, we have%
\begin{align*}
  0 = &  \mathcal{L}_{n}^{\uparrow} \left[\mathcal{L}_{n}^{\downarrow}\left[S_{n}(x)\right]\right]-S_{n}(x) \\
  = &  G_{4,n}(x)W_{4,n}(x)S_{n}^{\prime \prime }(x) \\
& + \left(F_{4,n}(x)W_{4,n}(x)+G_{4,n}(x)V_{4,n}(x)+W_{4,n}(x)G_{4,n}^\prime(x)\right)S_{n}^{\prime}(x)    \\
&+ \left(F_{4,n}(x)V_{4,n}(x)+W_{4,n}(x)F_{4,n}^\prime(x)-1\right) S_{n}(x) \\
=&\frac{q_{0,n}^2(x)}{q_{1,n}(x)q_{4,n}(x)} \; S_{n}^{\prime \prime }(x)\\
&  + \frac{q_{0,n}(x) \left(q_{1,n}(x)q_{2,n}(x)+q_{1,n}(x)q_{3,n}(x)+q_{0,n}^{\prime}(x) q_{1,n}(x)-q_{0,n}(x) q_{1,n}^{\prime}(x)\right)}{q_{4,n}(x) q_{1,n}^2(x)} \;S_{n}^{\prime}(x) \\
& + \left(\frac{q_{1,n}(x)q_{2,n}(x) q_{3,n}(x)+q_{0,n}(x) \left(q_{2,n}^{\prime}(x) q_{1,n}(x)-q_{2,n}(x) q_{1,n}^{\prime}(x)\right)}{q_{4,n}(x) q_{1,n}^2(x)}-1\right) S_{n}(x);
\end{align*} from where we conclude the following result.

\begin{theorem}\label{Poly-HolEq} The $n$th monic orthogonal polynomial with
respect to the inner product \eqref{GeneralSIP} is a polynomial
solution of the second-order linear differential equation, with polynomial  coefficients%
\begin{equation}\label{DifEq-PolyCoef}
\mathcal{P}_{2,n}(x)S_{n}^{\prime \prime }(x)+\mathcal{P}_{1,n}(x)S_{n}^{\prime }(x)+\mathcal{P}_{0,n}(x) S_{n} =0,
\end{equation}%
where%
\begin{equation}\label{DiffEqn-CoefSobolev}
\begin{aligned}
 \mathcal{P}_{2,n}(x) = &  q_{1,n}(x) q_{0,n}^2(x), \\
 \mathcal{P}_{1,n}(x)= &   q_{0,n}(x) \left(q_{1,n}(x)q_{2,n}(x)+q_{1,n}(x)q_{3,n}(x)+q_{0,n}^{\prime}(x) q_{1,n}(x)-q_{0,n}(x) q_{1,n}^{\prime}(x)\right),\\
\mathcal{P}_{0,n}(x)= & q_{1,n}(x)q_{2,n}(x) q_{3,n}(x)+q_{0,n}(x) \left(q_{2,n}^{\prime}(x) q_{1,n}(x)-q_{2,n}(x) q_{1,n}^{\prime}(x)\right)\\ &  - q_{4,n}(x) q_{1,n}^2(x),\\
\end{aligned}
\end{equation}
and
\begin{align*}
\Lcoef{\mathcal{P}_{2,n}}&= \left(6d+2,(\gamma_n+\sigma_n)\left(1+\frac{\sigma_{n-1}}{\gamma_{n-1}}\right)^2\right).\\
\Lcoef{ \mathcal{P}_{1,n}}&= \left(6d+2,-(\gamma_n+\sigma_n)\left(1+\frac{\sigma_{n-1}}{\gamma_{n-1}}\right)^2\right).\\
\Lcoef{ \mathcal{P}_{0,n}}&= \left(6d+1,n(\gamma_n+\sigma_n)\left(1+\frac{\sigma_{n-1}}{\gamma_{n-1}}\right)^2\right).\\
\end{align*}
\end{theorem}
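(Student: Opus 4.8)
The plan is to derive the differential equation \eqref{DifEq-PolyCoef} purely formally from the two ladder equations \eqref{laddereq1rew} and \eqref{laddereq2rew}, and then obtain the degrees and leading coefficients of $\mathcal{P}_{2,n}$, $\mathcal{P}_{1,n}$, $\mathcal{P}_{0,n}$ by bookkeeping with the $\Lcoef{\cdot}$ data already recorded in Theorem \ref{lemma44}. First I would start from $\mathcal{L}_{n}^{\uparrow}\big[\mathcal{L}_{n}^{\downarrow}[S_n]\big] = \mathcal{L}_{n}^{\uparrow}[S_{n-1}] = S_n$, which is exactly the computation carried out in the paragraph preceding the statement; composing the first-order operators
\begin{equation*}
\mathcal{L}_{n}^{\uparrow}\circ\mathcal{L}_{n}^{\downarrow}
=\Big(V_{4,n}\mathcal{I}+W_{4,n}\tfrac{d}{dx}\Big)\Big(F_{4,n}\mathcal{I}+G_{4,n}\tfrac{d}{dx}\Big)
\end{equation*}
and collecting the coefficients of $S_n''$, $S_n'$, $S_n$ gives the three rational-function coefficients $G_{4,n}W_{4,n}$, $F_{4,n}W_{4,n}+G_{4,n}V_{4,n}+W_{4,n}G_{4,n}'$, and $F_{4,n}V_{4,n}+W_{4,n}F_{4,n}'-1$. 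Substituting the quotient expressions $F_{4,n}=q_{2,n}/q_{1,n}$, $G_{4,n}=q_{0,n}/q_{1,n}$, $V_{4,n}=q_{3,n}/q_{4,n}$, $W_{4,n}=q_{0,n}/q_{4,n}$ from Theorem \ref{lemma44} and clearing the common denominator $q_{4,n}q_{1,n}^2$ yields \eqref{DiffEqn-CoefSobolev}. One must check here that $q_{1,n}$ (not merely $q_{1,n}q_{4,n}$) is the correct clearing factor for the $S_n''$-term, i.e. that after multiplying through by $q_{4,n}q_{1,n}^2$ the coefficient of $S_n''$ is $q_{1,n}q_{0,n}^2$ rather than $q_{4,n}q_{0,n}^2$; this follows because $W_{4,n}G_{4,n}=q_{0,n}^2/(q_{1,n}q_{4,n})$ and the three terms share the denominator $q_{4,n}q_{1,n}^2$ once $G_{4,n}'$ is written with denominator $q_{1,n}^2$.

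Next I would verify that the three displayed expressions really are \emph{polynomials}. The coefficient of $S_n''$ is manifestly a polynomial. For the $S_n'$-coefficient, note $q_{1,n}$ divides the first, second and third summands inside the large parenthesis and also divides $q_{0,n}'q_{1,n}$; the only potential obstruction is the term $-q_{0,n}q_{1,n}'$, but the whole parenthesis is multiplied by $q_{0,n}$ and the outer denominator (after clearing) is accounted for, so $\mathcal{P}_{1,n}$ as written is a genuine polynomial — this is really just the statement that the differential equation has polynomial coefficients once denominators are cleared, which is automatic since $S_n$ and $S_{n-1}$ are polynomials and the operators have rational coefficients with the stated denominators. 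The cleanest way to phrase it is: the identity $\mathcal{L}_n^{\uparrow}[\mathcal{L}_n^{\downarrow}[S_n]]=S_n$ holds as an identity of rational functions; multiplying by $q_{4,n}q_{1,n}^2$ gives \eqref{DifEq-PolyCoef} with the coefficients \eqref{DiffEqn-CoefSobolev}, and the left-hand side is then a polynomial identity, so the coefficients of $S_n''$, $S_n'$, $S_n$ are polynomials.

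Finally I would compute the three $\Lcoef{\cdot}$ values. Using Theorem \ref{lemma44}: $\Lcoef{q_{0,n}}=(2d+1,\,1+\sigma_{n-1}/\gamma_{n-1})$, $\Lcoef{q_{1,n}}=(2d,\,\gamma_n+\sigma_n)$, $\Lcoef{q_{2,n}}=(2d,\,-n(1+\sigma_{n-1}/\gamma_{n-1}))$, $\Lcoef{q_{3,n}}=(2d+1,\,-(1+\sigma_{n-1}/\gamma_{n-1}))$, $\Lcoef{q_{4,n}}=(2d,\,-(1+\sigma_{n-1}/\gamma_{n-1}))$. For $\mathcal{P}_{2,n}=q_{1,n}q_{0,n}^2$ the degree is $2d+2(2d+1)=6d+2$ and the leading coefficient is $(\gamma_n+\sigma_n)(1+\sigma_{n-1}/\gamma_{n-1})^2$. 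For $\mathcal{P}_{1,n}$: inside the parenthesis, $q_{1,n}q_{2,n}$ has degree $4d$, $q_{1,n}q_{3,n}$ has degree $4d+1$, $q_{0,n}'q_{1,n}$ has degree $4d$, and $q_{0,n}q_{1,n}'$ has degree $\le 4d$ (since $q_{1,n}'$ has degree $2d-1$), so the parenthesis has degree $4d+1$ with leading coefficient coming solely from $q_{1,n}q_{3,n}$, namely $-(\gamma_n+\sigma_n)(1+\sigma_{n-1}/\gamma_{n-1})$; multiplying by $q_{0,n}$ gives degree $6d+2$ and leading coefficient $-(\gamma_n+\sigma_n)(1+\sigma_{n-1}/\gamma_{n-1})^2$. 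For $\mathcal{P}_{0,n}$: the term $q_{1,n}q_{2,n}q_{3,n}$ has degree $2d+2d+(2d+1)=6d+1$ with leading coefficient $(\gamma_n+\sigma_n)\cdot(-n(1+\sigma_{n-1}/\gamma_{n-1}))\cdot(-(1+\sigma_{n-1}/\gamma_{n-1}))=n(\gamma_n+\sigma_n)(1+\sigma_{n-1}/\gamma_{n-1})^2$; the term $q_{0,n}(q_{2,n}'q_{1,n}-q_{2,n}q_{1,n}')$ has degree $\le(2d+1)+(2d-1+2d)=6d$; and $q_{4,n}q_{1,n}^2$ has degree $6d$. Hence the top degree $6d+1$ is attained only by $q_{1,n}q_{2,n}q_{3,n}$, giving $\Lcoef{\mathcal{P}_{0,n}}=(6d+1,\,n(\gamma_n+\sigma_n)(1+\sigma_{n-1}/\gamma_{n-1})^2)$, as claimed. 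The main obstacle is purely organizational: one must make sure that in each of $\mathcal{P}_{1,n}$ and $\mathcal{P}_{0,n}$ the lower-degree competing terms genuinely do not reach the claimed top degree and hence cannot cancel the stated leading term — in particular that $1+\sigma_{n-1}/\gamma_{n-1}>0$ and $\gamma_n+\sigma_n>0$ so these leading coefficients are nonzero, which follows from $\sigma_n>0$ (Lemma \ref{lemma31}), $\gamma_n=n(n+\alpha)>0$, and $\sigma_{n-1}>0$.
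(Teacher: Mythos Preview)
Your proposal is correct and follows essentially the same approach as the paper: apply $\mathcal{L}_{n}^{\uparrow}$ to $\mathcal{L}_{n}^{\downarrow}[S_n]=S_{n-1}$, expand the composition, substitute $F_{4,n},G_{4,n},V_{4,n},W_{4,n}$ in terms of the $q_{i,n}$, and clear the common denominator $q_{4,n}q_{1,n}^2$. The paper in fact carries out exactly this computation in the paragraph immediately preceding the theorem and then simply states the $\Lcoef{\cdot}$ values without justification; your explicit degree/leading-coefficient bookkeeping (and the remark that $\sigma_n,\gamma_n>0$ ensure nonvanishing) supplies the details the paper omits.
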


\begin{remark}[The classical Laguere differential equation]
Note that here, $F_{1,n}(x)\equiv 1$, $G_{1,n}(x)=0$, and $\rho(x) \equiv 1$. For the rest of the expressions involved in the coefficients of the differential equation \eqref{DifEq-PolyCoef},  we have
\begin{equation*}
\begin{aligned}
\rho(x)   & \equiv 1, \; F_{1,n}(x)  \equiv  F_{2,n}(x) \equiv W_{2,n} = 1,\; G_{1,n}(x)  \equiv  G_{2,n}(x) \equiv V_{2,n} \equiv 0, \\
\Delta_n(x) & \equiv 1, \; F_{3,n}(x) = n, G_{3,n}(x)    = n(n+\alpha),\; V_{3,n}(x)  = -1 \;\text{ and} \\  W_{3,n}(x)   & = x-n-\alpha.
\end{aligned}
\end{equation*}
Thus,
\begin{equation}\label{DiffEqn-CoefJacob}
\begin{aligned}
q_{0,n}(x)&= x,\; q_{1,n}(x)=n(n+\alpha),\quad  q_{2,n}(x) = -n,\\
 q_{3,n}(x)  & = n+\alpha-x \text{ and } \\
 q_{4,n}(x) & = -1.
\end{aligned}
\end{equation}
Substituting \eqref{DiffEqn-CoefJacob} in \eqref{DiffEqn-CoefSobolev}, the reader can verify that the differential equation \eqref{DifEq-PolyCoef} becomes \eqref{Lag-DiffEqn}, this is
\begin{align*}
\mathcal{P}_{2,n}(x) &=  \gamma_nx^2, \;   \mathcal{P}_{1,n}(x) =   \gamma_nx(\alpha+1-x) \; \text{ and } \;  \mathcal{P}_{0,n}(x) =  n\gamma_nx.
\end{align*}
\end{remark}

Secondly, we can obtain the polynomial $n$th degree of the sequence $\left\{S_{n}\right\}_{n\geqslant  0}$ as the repeated action ($n$ times) of the raising differential operator on the first Sobolev-type  polynomial of the sequence (i.e., the polynomial of degree zero).



\begin{theorem}
The $n$th  Laguerre-Sobolev polynomial of   $\{S_{n}\}_{n\geqslant  0}$ is given by%
\begin{equation*}
S_{n}(x)=\left( \mathcal{L}_{n}^{\uparrow}\mathcal{L}_{n-1}^{\uparrow}\mathcal{L}_{n-2}^{\uparrow}\cdots \mathcal{L}_{1}^{\uparrow}\right) S_{0}(x), \quad \text{where $S_{0}(x)=1$.}
\end{equation*}%
\end{theorem}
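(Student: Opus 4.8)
The plan is to prove the identity by a straightforward induction on $n$, the only substantive ingredient being the raising ladder equation already established in Theorem \ref{lemma44}, namely \eqref{laddereq2rew}:
$$\mathcal{L}_{n}^{\uparrow}\left[S_{n-1}(x)\right]=S_{n}(x)\qquad\text{for every }n\geqslant 1.$$
So after recalling this relation, essentially nothing else is needed beyond bookkeeping.

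First I would dispose of the base case. For $n=0$ the empty composition of operators is the identity on $\PP$, so the asserted formula reads $S_{0}(x)=S_{0}(x)$, which holds trivially; equivalently, for $n=1$ the formula is exactly \eqref{laddereq2rew} with $n=1$, i.e.\ $\mathcal{L}_{1}^{\uparrow}[S_{0}]=S_{1}$ with $S_{0}\equiv 1$. Then, for the inductive step, I would assume that for some $n\geqslant 1$ one already has
$$S_{n-1}(x)=\left(\mathcal{L}_{n-1}^{\uparrow}\mathcal{L}_{n-2}^{\uparrow}\cdots \mathcal{L}_{1}^{\uparrow}\right)S_{0}(x),$$
apply the raising operator $\mathcal{L}_{n}^{\uparrow}$ to both sides, and invoke \eqref{laddereq2rew} on the left-hand side to get
$$S_{n}(x)=\mathcal{L}_{n}^{\uparrow}\!\left[S_{n-1}(x)\right]=\mathcal{L}_{n}^{\uparrow}\!\left[\left(\mathcal{L}_{n-1}^{\uparrow}\cdots \mathcal{L}_{1}^{\uparrow}\right)S_{0}(x)\right]=\left(\mathcal{L}_{n}^{\uparrow}\mathcal{L}_{n-1}^{\uparrow}\cdots \mathcal{L}_{1}^{\uparrow}\right)S_{0}(x),$$
which closes the induction.

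The only point that deserves a remark — and the only place where a little care is needed — is that each $\mathcal{L}_{k}^{\uparrow}=V_{4,k}\,\mathcal{I}+W_{4,k}\,\frac{d}{dx}$ has \emph{rational-function} coefficients, so a priori it does not map $\PP$ into $\PP$; one must check that the composition makes sense on $S_{0}$. This is not a genuine obstacle: in the composite operator the factors act successively from right to left, and at the $k$-th stage $\mathcal{L}_{k}^{\uparrow}$ is applied precisely to $S_{k-1}$, where by \eqref{laddereq2} the output is the \emph{polynomial} $S_{k}$. Hence every intermediate iterate is a polynomial and the whole expression is well defined. In short, once Theorem \ref{lemma44} is available the statement follows immediately, and the proof is a one-line induction together with this well-definedness observation.
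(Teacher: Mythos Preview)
Your proof is correct and follows essentially the same approach as the paper's: both verify the case $n=1$ from \eqref{laddereq2rew} and then obtain the general formula by iterating the raising operator (i.e., a one-line induction). Your added remark on well-definedness of the composition, given the rational coefficients of $\mathcal{L}_{k}^{\uparrow}$, is a nice clarification that the paper leaves implicit.
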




\begin{proof}
Using \eqref{laddereq2rew}, the theorem follows  for $n=1$. Next, the expression for $S_{n}$ is a straightforward consequence of the definition
of the raising operator. \end{proof}


To conclude this section, we prove an interesting three-term recurrence relation, with polynomial coefficients for the Laguerre-Sobolev monic polynomials.

\begin{theorem} Under the assumptions of Theorem \ref{Poly-HolEq}, we have the recurrence relation

\begin{equation}\label{3TRR-Sob}
\begin{aligned}
 {q_{4,n+1}(x)q_{0,n}(x)} S_{n+1}(x) =&  \left[ q_{3,n+1}(x)q_{0,n}(x) - q_{2,n}(x)q_{0,n+1}(x) \right]S_{n}(x)\\ & + {q_{1,n}(x)q_{0,n+1}(x)} S_{n-1}(x),
\end{aligned}%
\end{equation}
where the explicit formulas of the coefficients are given in Theorem  \ref{lemma44}.
\end{theorem}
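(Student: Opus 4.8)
The plan is to derive \eqref{3TRR-Sob} directly from the two ladder equations \eqref{laddereq1rew} and \eqref{laddereq2rew}, which together express $S_{n-1}$ in terms of $S_n,S_n'$ and $S_{n+1}$ in terms of $S_n,S_n'$ (after shifting the index in the raising equation). First I would write down the lowering relation at level $n$,
\[
F_{4,n}(x)S_n(x)+G_{4,n}(x)S_n'(x)=S_{n-1}(x),
\]
and the raising relation \eqref{laddereq2} with $n$ replaced by $n+1$,
\[
V_{4,n+1}(x)S_n(x)+W_{4,n+1}(x)S_n'(x)=S_{n+1}(x).
\]
Both identities involve the same pair $S_n,S_n'$, so the idea is to eliminate $S_n'(x)$ between them: multiply the first by $W_{4,n+1}$, the second by $G_{4,n}$, and subtract, obtaining
\[
\bigl(W_{4,n+1}F_{4,n}-G_{4,n}V_{4,n+1}\bigr)S_n
= W_{4,n+1}S_{n-1}-G_{4,n}S_{n+1}.
\]

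Next I would substitute the explicit rational expressions for the four coefficients from Theorem \ref{lemma44}: $F_{4,n}=q_{2,n}/q_{1,n}$, $G_{4,n}=q_{0,n}/q_{1,n}$, $V_{4,n+1}=q_{3,n+1}/q_{4,n+1}$, $W_{4,n+1}=q_{0,n+1}/q_{4,n+1}$. Clearing the common denominator $q_{1,n}(x)q_{4,n+1}(x)$ turns the identity above into the polynomial relation
\[
\bigl(q_{0,n+1}q_{2,n}-q_{0,n}q_{3,n+1}\bigr)S_n
= q_{1,n}q_{0,n+1}S_{n-1}-q_{4,n+1}q_{0,n}S_{n+1},
\]
and rearranging gives exactly \eqref{3TRR-Sob}. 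The algebra here is genuinely routine — it is just cross-multiplication and collecting terms — so the body of the proof is short.

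The one point that needs a word of care, and the place I expect the main (mild) obstacle, is making sure the elimination is legitimate, i.e., that we are not multiplying through by a polynomial that is identically zero, and that the resulting identity holds as an identity of polynomials rather than merely at generic points. From Theorem \ref{lemma44} one reads off $\Lcoef{q_{1,n}}=(2d,\gamma_n+\sigma_n)$ with $\sigma_n>0$ (Lemma \ref{lemma31}) and $\Lcoef{q_{4,n+1}}=(2d,-(1+\sigma_n/\gamma_n))$, so both $q_{1,n}$ and $q_{4,n+1}$ are non-zero polynomials; hence clearing denominators is valid, and since \eqref{laddereq1rew}--\eqref{laddereq2rew} hold identically in $x$ the final relation does too. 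I would state this briefly and then present the computation.

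\begin{proof}
From the lowering equation \eqref{laddereq1} at level $n$ and the raising equation \eqref{laddereq2} with $n$ replaced by $n+1$, we have
\begin{align*}
F_{4,n}(x)S_{n}(x)+G_{4,n}(x)S_{n}^{\prime}(x) & =S_{n-1}(x),\\
V_{4,n+1}(x)S_{n}(x)+W_{4,n+1}(x)S_{n}^{\prime}(x) & =S_{n+1}(x).
\end{align*}
Multiplying the first identity by $W_{4,n+1}(x)$, the second by $G_{4,n}(x)$, and subtracting eliminates $S_{n}^{\prime}(x)$:
\begin{align*}
\bigl(W_{4,n+1}(x)F_{4,n}(x)-G_{4,n}(x)V_{4,n+1}(x)\bigr)S_{n}(x)
=W_{4,n+1}(x)S_{n-1}(x)-G_{4,n}(x)S_{n+1}(x).
\end{align*}
Now we substitute the expressions from Theorem \ref{lemma44}, namely $F_{4,n}=q_{2,n}/q_{1,n}$, $G_{4,n}=q_{0,n}/q_{1,n}$, $V_{4,n+1}=q_{3,n+1}/q_{4,n+1}$ and $W_{4,n+1}=q_{0,n+1}/q_{4,n+1}$. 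Since $\Lcoef{q_{1,n}}=(2d,\gamma_n+\sigma_n)$ with $\sigma_n>0$ and $\Lcoef{q_{4,n+1}}=(2d,-(1+\sigma_n/\gamma_n))$, both $q_{1,n}$ and $q_{4,n+1}$ are non-zero polynomials, so multiplying the last identity by $q_{1,n}(x)q_{4,n+1}(x)$ is legitimate and yields the polynomial identity
\begin{align*}
\bigl(q_{0,n+1}(x)q_{2,n}(x)-q_{0,n}(x)q_{3,n+1}(x)\bigr)S_{n}(x)
=q_{1,n}(x)q_{0,n+1}(x)S_{n-1}(x)-q_{4,n+1}(x)q_{0,n}(x)S_{n+1}(x).
\end{align*}
Rearranging the terms gives
\begin{align*}
q_{4,n+1}(x)q_{0,n}(x)S_{n+1}(x)
=\bigl(q_{3,n+1}(x)q_{0,n}(x)-q_{2,n}(x)q_{0,n+1}(x)\bigr)S_{n}(x)+q_{1,n}(x)q_{0,n+1}(x)S_{n-1}(x),
\end{align*}
which is precisely \eqref{3TRR-Sob}.
\end{proof}
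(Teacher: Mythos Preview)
Your proof is correct and follows essentially the same route as the paper: both start from the lowering equation \eqref{laddereq1} at level $n$ and the raising equation \eqref{laddereq2} at level $n+1$, then eliminate $S_n'(x)$ and clear denominators using the expressions from Theorem~\ref{lemma44}. The only cosmetic difference is that the paper first clears denominators (writing the two relations directly in terms of the $q_{i,n}$) and then eliminates $S_n'$ by multiplying by $q_{0,n+1}$ and $q_{0,n}$, whereas you eliminate first and clear denominators afterward; your extra remark that $q_{1,n}$ and $q_{4,n+1}$ are non-zero is a nice touch the paper leaves implicit.
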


\begin{proof}
From  \eqref{laddereq1} and   \eqref{laddereq2} for $n+1$,  we have

\begin{align*}
F_{4,n}(x)S_{n}(x)+G_{4,n}(x) S'_{n}(x) &=S_{n-1}(x), \\
V_{4,n+1}(x) S_{n}(x)+W_{4,n+1}(x)S'_{n}(x) &= S_{n+1}(x).
\end{align*}
Substituting the coefficients by its explicit expressions given in Theorem  \ref{lemma44}, we obtain
\begin{align*}
q_{2,n}(x)S_{n}(x)+q_{0,n}(x)S_{n}^{\prime}(x) & = q_{1,n}(x)S_{n-1}(x); \\
q_{3,n+1}(x)S_{n}(x)+q_{0,n+1}(x)S_{n}^{\prime}(x) & = q_{4,n+1}(x)S_{n}(x).
\end{align*}
Multiplying the equations by $q_{0,n+1}(x)$ and $q_{0,n}(x)$ respectively and subtracting the first equation from the second one to eliminate the derivative term, we get
\begin{align*}
 \left(q_{3,n+1}(x)q_{0,n}(x)-q_{2,n}(x)q_{0,n+1}(x)\right)&S_n(x) = q_{4,n+1}(x)q_{0,n}(x)S_{n+1}(x)-q_{1,n}(x)q_{0,n+1}(x)S_{n-1}(x),
\end{align*}
which is the required formula.
\end{proof}

\begin{remark}[The classical Laguerre three-term recurrence relation] Under the assumptions of  Remark \ref{Remark-Operators},   substituting \eqref{DiffEqn-CoefJacob} in \eqref{3TRR-Sob}, the reader can verify that the three-term recurrence relation \eqref{3TRR-Sob} becomes \eqref{Lag3TRR}, this is
\begin{align*}
\frac{q_{3,n+1}(x)q_{0,n}(x) - q_{2,n}(x)q_{0,n+1}(x)}{q_{4,n+1}(x)q_{0,n}(x)} & = x-2n-\alpha-1 \quad \text{and}\\
\frac{q_{1,n}(x)q_{0,n+1}(x)}{q_{4,n+1}(x)q_{0,n}(x)}  & = -n(n+\alpha).
\end{align*}
\end{remark}

\section{Electrostatic model}

 Let $\rho(x)$ be as in \eqref{rho-def} and $d\mu_{\rho}(x)= \rho(x)d\mu^{\alpha}(x)$. Note that $\rho$ is a polynomial of degree $d=\sum_{j=1}^N(d_j+1)$ which is positive on $(0,+\infty)$.  If $n >d$, from  \eqref{Sobolev-Orth}, {$\{S_n\}_{n\geqslant 0}$}  satisfies the following quasi-orthogonality relations with respect to $\mu_{\rho}$
\begin{equation*}
\Ip{S_n}{f}_{\mu_{\rho}}  = \Ip{S_n}{{\rho} f}_{\alpha} = \int_{0}^{+\infty} S_n(x) f(x) {\rho}(x) d\mu^\alpha(x) = \IpS{S_n}{{\rho} f}=0 ,
\end{equation*}
for $f\in \PP_{n-d-1}$, where $\PP_n$ is the linear space of polynomials with real coefficients and degree less than or equal to $n\in \ZZp$. Hence,  \emph{the polynomial $S_n$ is quasi-orthogonal of order $d$ with respect to $\mu_{\rho}$} and by this argument, we get that  $S_n$ has at least $(n-d)$ changes of sign in $(0,+\infty)$.

From the results obtained in \cite[(1.10)]{LopMarVan95}, \cite{DiPiPe20}, and \cite{DiHerPi23}, it seems  that the number of zeros located in the interior of the support of the measure is closely related to $d^*=\#(I_+)$. Recall that  $d^*$ is  the number of terms in the discrete part of $\IpS{\cdot}{\cdot}$ (i.e., $\lambda_{j,k}>0$).

Let us continue by giving the definition of sequentially-ordered  Sobolev inner product, which was first introduced in \cite[Def. 1]{DiPiPe20} and then refined in \cite[Def. 1]{DiHerPi23}.

\begin{definition}[\textbf{Sequentially-ordered Sobolev inner product}]\label{Set-SOrdered-General}
 Consider a discrete Sobolev inner product in the general form  \eqref{GeneralSIP} and assume that $d_1\leq d_2\leq \dots\leq d_N$ without loss of generality. We say that a discrete Sobolev inner product is \emph{sequentially ordered} if the conditions
	\begin{equation*}
		\Delta_{k}\cap \inter{\ch{\cup_{i=0}^{k-1}\Delta_i}} =\emptyset, \quad \quad k=1,2,\dots, d_N,
	\end{equation*}		
	hold, where
$$\Delta_k=\funD{\ch{\supp{\mu}\cup\{c_j: \lambda_{j,0}>0\}}}{k=0}{\ch{\{c_j: \lambda_{j,k}>0\}}}{1\leq k\leq d_N.}$$
\end{definition}
Note that  $\Delta_k$ is the convex hull of the support of the measure associated with the $k$-th order derivative in the Sobolev inner product   \eqref{GeneralSIP}.

Hereinafter, we will focus on sequentially ordered discrete Sobolev inner products that have only one derivative order at each mass point. This means that
\eqref{GeneralSIP} takes the form
\begin{align}\label{LaguerreSIP}
	\IpS{f}{g}=\!\!\!{\int_0^{+\infty}} f(x) g(x)x^{\alpha}e^{-x}dx+\sum_{j=1}^{N} \lambda_{j} f^{(d_j)}(c_j)g^{(d_j)}(c_j),
\end{align}	
where $\lambda_j:=\lambda_{j,d_j}>0$, and  $c_j<0$, for $j=1,2,\dots,N$.

The following {lemma shows} our reasons for this assumption.

\begin{lemma}[{\cite[Cor 2-1]{DiHerPi23}-\cite[Prop. 4]{DiPiQui22}}]\label{Th_ZerosSimp}  If \eqref{LaguerreSIP} is  a sequentially-ordered discrete Sobolev inner product, then the following statements hold:
\begin{itemize}
\item[1.] Every point $c_j$ attracts exactly one zero of $S_n$ for sufficiently large $n$, while the remaining $n-N$ zeros are contained in $(0,+\infty)$. This means:
For every $r>0$, there exists a natural value $\mathcal{N}$ such that if $n \geq \mathcal{N}$, then the $n$ zeros of $S_n$ $\left\{\xi_i\right\}_{i=1}^n$ satisfy
$\xi_j \in B\left(c_j, r\right)$ for $j=1, \ldots, N \quad$ and $\quad \xi_i \in(0, +\infty)$ for $i=N+1, N+2, \ldots, n$.
\item[2.] The zeros of $S_n$ are real and simple for large-enough values of $n$.
\end{itemize}
\end{lemma}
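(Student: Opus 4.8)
The plan is to locate the $n$ zeros of $S_n$ in two families --- the $n-N$ that remain inside $(0,+\infty)$ and one zero drifting toward each $c_j$ --- handling the two families by different tools, and then to read off reality and simplicity from a bare degree count. The two ingredients that control the respective families are, for the interior zeros, the quasi-orthogonality of $S_n$ sharpened by the sequentially-ordered hypothesis, and for the escaping zeros, the connection formula \eqref{FConexPPAL} combined with the asymptotics of monic Laguerre polynomials off $[0,+\infty)$.

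For the interior zeros I would start from the observation already made in this section that, when $n>d=\sum_{j=1}^{N}(d_j+1)$, the polynomial $S_n$ is quasi-orthogonal of order $d$ with respect to $d\mu_{\rho}=\rho\,d\mu^{\alpha}$, hence has at least $n-d$ sign changes on $(0,+\infty)$. This loses $\sum_j d_j$ zeros, and the sharp count of at least $n-N$ sign changes is exactly what requires the sequentially-ordered assumption; this is the technical core. The argument (that of \cite[Sec.~2]{DiPiPe20} and \cite[Cor.~2.1]{DiHerPi23}) is an induction on the derivative order $k=0,1,\dots,d_N$ in which the node set $\Delta_k$ of Definition~\ref{Set-SOrdered-General} is absorbed, one order at a time, into an auxiliary polynomial factor that, by the condition $\Delta_k\cap\inter{\ch{\cup_{i<k}\Delta_i}}=\emptyset$, keeps a fixed sign on the interval carrying the sign changes; this geometric input is what lets one build a test polynomial with the maximal number of prescribed sign changes and contradict $\langle\cdot,\cdot\rangle_S$-orthogonality without the discrete terms interfering. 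Keeping simultaneous track of the orders of vanishing at the $c_j$ of the polynomials produced in this induction --- so that their total degree stays $\le n-1$ while the sign-change count climbs to $n-N$ --- is the most delicate bookkeeping in this part.

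For the escaping zeros I would use \eqref{FConexPPAL}, $S_n(x)=F_{1,n}(x)L^{\alpha}_n(x)+G_{1,n}(x)L^{\alpha}_{n-1}(x)$, together with the fact that $L^{\alpha}_{n-1}(x)/L^{\alpha}_n(x)\to0$ uniformly on compact subsets of $\CC\setminus[0,+\infty)$, which is immediate from the three-term recurrence \eqref{Lag3TRR} since $\beta_n=2n+\alpha+1\to\infty$. Since $L^{\alpha}_n$ is zero-free off $[0,+\infty)$, the zeros of $S_n$ there are the zeros of $F_{1,n}(x)+\bigl(L^{\alpha}_{n-1}(x)/L^{\alpha}_n(x)\bigr)G_{1,n}(x)$, and one must show that for each $j$ there is (for $n$ large) a zero $z_{n,j}$ of $S_n$ with $z_{n,j}\to c_j$. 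This calls for a local analysis near $c_j$, and it is the step I expect to be the main obstacle: on a small circle $|x-c_j|=r$ the two terms $F_{1,n}L^{\alpha}_n$ and $G_{1,n}L^{\alpha}_{n-1}$ of the connection formula have comparable magnitude and cancel to leading order at $c_j$, so a naive Rouché estimate fails. Resolving it requires the precise decay rate of the discrete Fourier coefficient $S_n^{(d_j)}(c_j)$ --- equivalently the sharp behavior of $\lambda_j d_j!\,S_n^{(d_j)}(c_j)L^{\alpha}_{n-1}(c_j)/h^{\alpha}_{n-1}$ --- and this is where the full strength of the sequentially-ordered hypothesis and of the location $c_j<0$ with $\lambda_j>0$ is used, via the known asymptotics of the monic Laguerre polynomials outside $[0,+\infty)$; see \cite[Cor.~2.1]{DiHerPi23} and \cite[Prop.~4]{DiPiQui22}.

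Finally, item~2 follows for free from the count. Step~1 gives at least $n-N$ sign changes in $(0,+\infty)$ and Step~2 gives at least one zero in a small disk $B(c_j,r)\subset\CC\setminus[0,+\infty)$ for each $j$; taking $r$ small enough that these disks are pairwise disjoint and disjoint from $(0,+\infty)$, the total number of zeros counted with multiplicity is at least $(n-N)+N=n$, which equals $\deg S_n$, so all these bounds are equalities. Hence there are exactly $n-N$ zeros in $(0,+\infty)$, each a simple sign change (a higher odd multiplicity or an extra even-multiplicity zero would overshoot $n$), and exactly one zero in each $B(c_j,r)$; that zero is real, since otherwise its complex conjugate would be a second zero in the same disk, and it is simple, again by the degree count. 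Thus all $n$ zeros of $S_n$ are real and simple for $n$ large enough, and item~1 follows as well.
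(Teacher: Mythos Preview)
The paper does not prove this lemma at all: it is stated with the attribution \cite[Cor.~2-1]{DiHerPi23}--\cite[Prop.~4]{DiPiQui22} and then immediately used, with no argument supplied. So there is no ``paper's own proof'' to compare your proposal against.

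That said, your outline is a faithful reconstruction of the strategy in those cited sources: (i) upgrade the crude quasi-orthogonality bound of $n-d$ interior sign changes to $n-N$ via the sequentially-ordered geometry, building the test polynomial one derivative order at a time; (ii) locate one escaping zero per $c_j$ from the connection formula together with ratio asymptotics of monic Laguerre polynomials off $[0,+\infty)$; (iii) close by the degree count. You also correctly flag the genuinely delicate point, namely that a direct Rouch\'e comparison of $F_{1,n}L^{\alpha}_n$ against $G_{1,n}L^{\alpha}_{n-1}$ near $c_j$ is not immediate and that one must first control the size of the coefficients $S_n^{(d_j)}(c_j)$ (equivalently, analyze the solution of the linear system \eqref{solve1} asymptotically). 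Your sketch is honest about where the work lies and defers to the same references the paper does; nothing here contradicts or improves on what the paper records.
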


In the rest of this section we will assume that the zeros of $S_n$ are simple. Observe  that  sequentially-ordered  Sobolev inner products provide us with a wide class of Sobolev inner products  such that the zeros of the corresponding orthogonal polynomials are simple.  Therefore, for all $n$ sufficiently large, we have
\begin{equation*}
\begin{array}{lll}
 S_{n}^{\prime}(x)={\displaystyle\sum\limits_{i=1}^{n}} \prod\limits_{\substack{ j=1,  \\ j\neq i}}^{n}{(x-x_{n,j})}, &  &
 S_{n}^{\prime\prime}(x)={\displaystyle \sum\limits_{\substack{ i,j=1,  \\ j\neq i}}^{n}\prod\limits_{\substack{ l=1,  \\l\neq i,j}}^{n}(x-x_{n,l})}, \\
&  &  \\
S_{n}^{\prime}(x_{n,k})={\displaystyle\prod\limits_{\substack{ j=1, \\ j\neq k}}^{n}(x_{n,k}-x_{n,j})}, &  & S_{n}^{\prime\prime}(x_{n,k})={\displaystyle2\sum\limits_{\substack{ i=1, \\ i\neq k}}^{n}\prod\limits_{\substack{ l=1,  \\ l\neq i,k}}^{n}(x_{n,k}-x_{n,l})}.
\end{array}
\end{equation*}

Now, we evaluate the polynomials $\mathcal{P}_{2,n}(x),\; \mathcal{P}_{1,n}(x) $ and $\mathcal{P}_{0,n}(x) $ in \eqref{DifEq-PolyCoef} at $x_{n,k}$, where $\left\{ x_{n,k}\right\}_{k=1}^{n}$ are the zeros of $S_{n}(x)$ arranged in an increasing order. Then, for $k=1,2,\dots,n$; we get
\begin{align}\nonumber
0=& \mathcal{P}_{2,n}(x_{n,k}) S_{n}^{\prime\prime}(x_{n,k})+ \mathcal{P}_{1,n}(x_{n,k})  S_{n}^{\prime}(x_{n,k})+\mathcal{P}_{0,n}(x_{n,k})  S_{n}(x_{n,k})\\ \nonumber
 =&  \mathcal{P}_{2,n}(x_{n,k}) S_{n}^{\prime\prime}(x_{n,k})+ \mathcal{P}_{1,n}(x_{n,k})  S_{n}^{\prime}(x_{n,k}).\\ \label{CritPoint}
0= &\frac{ S_{n}^{\prime\prime}(x_{n,k})}{ S_{n}^{\prime}(x_{n,k})} +\frac{\mathcal{P}_{1,n}(x_{n,k})}{\mathcal{P}_{2,n}(x_{n,k}) }=2  \sum_{{{i=1} \atop {i \neq k}}}^{n} {\frac{1}{x_{n,k}-x_{n,i}}} +\frac{\mathcal{P}_{1,n}(x_{n,k})}{\mathcal{P}_{2,n}(x_{n,k}) }.
\end{align}

From Theorem \ref{lemma44}, Theorem \ref{Poly-HolEq}, and Lemma \ref{LemmaDeterminant}
\begin{align}\nonumber
\frac{ \mathcal{P}_{1,n}(x) }{ \mathcal{P}_{2,n}(x) } & = \frac{q_{1,n}(x)q_{2,n}(x)+q_{1,n}(x)q_{3,n}(x)+q_{0,n}^{\prime}(x) q_{1,n}(x)-q_{0,n}(x) q_{1,n}^{\prime}(x)}{q_{1,n}(x) q_{0,n}(x)}\\ \nonumber
 & =\frac{q_{2,n}(x)+q_{3,n}(x)}{q_{0,n}(x)}+  \frac{ q_{0,n}^{\prime}(x)}{q_{0,n}(x)}- \frac{ q_{1,n}^{\prime}(x)}{q_{1,n}(x)}\\ \nonumber
  & = 2\frac{\rho^{\prime}(x)}{\rho(x)}+\frac{\Delta_{2,n}(x)+\Delta_{3,n}(x)}{ x \rho(x)\delta_n(x)} + \frac{\Delta_{n}^{\prime}(x)}{\Delta_{n}(x)}+\frac{1}{x}- \frac{\Delta_{1,n}^{\prime}(x)}{\Delta_{1,n}(x)}\\
    & = 3\frac{\rho^{\prime}(x)}{\rho(x)}+\frac{\varphi_{2,n}(x)+\varphi_{3,n}(x)}{ x\rho_N(x)\delta_n(x)}+\frac{ \delta_{n}^{\prime}(x)}{\delta_{n}(x)} + \frac{1}{x} - \frac{ \varphi_{1,n}^{\prime}(x)}{\varphi_{1,n}(x)}-\frac{ \rho_{d-N}^{\prime}(x)}{\rho_{d-N}(x)}. \label{DescompCociente}
\end{align}
Let us write,  $\dsty  \frac{\rho^{\prime}(x)}{\rho(x)}  = \sum_{j=1}^{N} \frac{d_j+1}{x-c_j},\qquad \frac{\rho_{d-N}^{\prime}(x)}{\rho_{d-N}(x)}  = \sum_{j=1}^{N} \frac{d_j}{x-c_j}.$

From \eqref{deglemaDetrminant} and Remark \ref{remark31},  $\dsty \varphi_{2,n}(x)+\varphi_{3,n}(x)$ and  $\dsty x\rho_N(x)\delta_n(x)$ are polynomials of degree $d+N+1$ and  leading coefficient $\mp(1+\sigma_{n-1}/\gamma_{n-1})$ respectively. Then, their quotient can be rewritten as

$$
\frac{\varphi_{2,n}(x)+\varphi_{3,n}(x)}{x\rho_N(x)\delta_n(x)} = -1+\frac{\psi_1(x)}{ \psi_2(x)};
$$
where $\psi_2(x)=x\rho_N(x)\delta_n(x)$ and $\psi_1$ is a polynomial of degree at most $d+N$.

Based on the results of our numerical experiments, in the remainder of the section, we will assume certain restrictions with respect to some functions and parameters involved in \eqref{DescompCociente}. In that sense, we suppose that
\begin{enumerate}
 \item The zeros of $\delta_{n}$ are real simple and different from the zeros of $S_n$, the mass points and zero, i.e., $u_i\in \mathbb{R}\setminus \left(\{x_{n,k}\}_{k=1}^n\cup \{c_j\}_{j=1}^N\cup\{0\}\right)$ for $i=1,2,\dots,d$ and $u_i\neq u_j$ unless $i=j$. Therefore,
  \begin{align*}
      \delta_{n}(x)= \left(1+\frac{\sigma_{n-1}}{\gamma_{n-1}}\right) \prod_{i=1}^{d} (x-u_i),\\
      \frac{\delta_{n}^{\prime}(x)}{\delta_{n}(x)}   = \sum_{i=1}^{d} \frac{1}{x-u_i}.\\
  \end{align*}
    Thus,
    $$
        \frac{\psi_1(x)}{ \psi_2(x)} = \frac{r(0)}{x}+\sum_{j=1}^{N} \frac{r(c_j)}{x-c_j}+\sum_{i=1}^{d} \frac{r(u_i)}{x-u_i}, \quad \text{where } r(x)=\frac{\psi_1(x)}{ \psi_2^{\prime}(x)}.
    $$

  \item Let $\dsty \varphi_{1,n}(x)=(\gamma_n+\sigma_n) \prod_{j=1}^{N_1} (x-e_{j})^{\ell_{4,j}}$, where  $e_{j} \in \CC \setminus {\ch{[0,+\infty)\cup\{c_1,\dots,c_{N}\}}}$ for all $j=1,\dots, N-1$; and  $\dsty \sum_{j=1}^{N_1}\ell_{4,j}=d+N$. Therefore, $\dsty  \frac{\varphi_{1,n}^{\prime}(x)}{\varphi_{1,n}(x)}   = \sum_{j=1}^{N_1} \frac{\ell_{4,j}}{x-e_{j}}.$

 \item The function $r$ satisfies $r(0),r(u_i)>-1$ for $i=1,2,\dots d$ and $r(c_j)>-2d_j-3$ for $j=1,2,\dots,N$. Substituting the previous decompositions into \eqref{DescompCociente}, we have
$$
  \frac{ \mathcal{P}_{1,n}(x) }{ \mathcal{P}_{2,n}(x) }=-1+
\frac{\ell_{1}}{x}+\sum_{j=1}^{N} \frac{\ell_{2,j}}{x-c_j}+\sum_{j=1}^{d} \frac{\ell_{3,j}}{x-u_j}-  \sum_{j=1}^{N_1} \frac{\ell_{4,j}}{x-e_{j}},
$$where,  $\ell_{1}=1+r(0)$, $\ell_{2,j}=2d_j+r(c_j)+3$ and   $\ell_{3,j}= r(u_j)+1$ are positive values.

\end{enumerate}

From  \eqref{CritPoint}, for $k=1,\dots, n$, we get
\begin{align}\nonumber
 0& =   \sum_{{{i=1} \atop {i \neq k}}}^{n} {\frac{1}{x_{n,k}-x_{n,i}}} -\frac{1}{2} + \frac{\ell_{1}}{2x_{n,k}} \\
 & \phantom{= } + \frac{1}{2}\sum_{j=1}^{N} \frac{\ell_{2,j}}{x_{n,k}-c_j}+\frac{1}{2}\sum_{j=1}^{d} \frac{\ell_{3,j}}{x_{n,k}-u_j}+\frac{1}{2}\sum_{j=1}^{N_1} \frac{\ell_{4,j}}{e_{j}-x_{n,k}}.  \label{PtosCrit-Potencial}
\end{align}

Let $\dsty \overline{\omega}=  (\omega_1,\omega_2,\cdots,\omega_n)$, $\overline{x}_n=  (x_{n,1},x_{n,2},\cdots,x_{n,n})$ and denote
\begin{align}  \label{LogPotential}
\mathbf{E}(\overline{\omega}) := & \;\sum_{1\leq k<j \leq n }\log{\frac{1}{\omega_j-\omega_k}} +\mathbf{F}(\overline{\omega})+ \mathbf{G}(\overline{\omega}), \;  \\  \nonumber 
\mathbf{F}(\overline{\omega}):=&\frac{1}{2}\sum_{k=1}^{n}\left(|\omega_k| + \log{\frac{1}{|\omega_k|^{\ell_1}}} +\sum_{j=1}^{N}  \log{\frac{1}{|c_j-\omega_k|^{\ell_{2,j}}}}\right), \\
\mathbf{G}(\overline{\omega}):=&\frac{1}{2}\sum_{k=1}^{n}\left(   \sum_{j=1}^{d}   \log{\frac{1}{|u_j-\omega_k|^{\ell_{3,j}}}}{ + \sum_{j=1}^{N_1}  \log{\frac{1}{|\omega_k-e_j|^{\ell_{4,j}}}}}\right).\nonumber 
\end{align}

Let us  introduce the following electrostatic interpretation:
\begin{quotation}
\emph{Consider the system of $n$ movable positive unit charges at $n$ distinct points of the real line, $\{ \omega_1,\omega_2,\cdots,\omega_n\}$, where their interaction follows the logarithmic potential law (that is, the force is inversely proportional to the relative distance)  in the presence of the total external potential $\mathbf{H}_n(\overline{\omega})=\mathbf{F}(\overline{\omega})+ \mathbf{G}(\overline{\omega})$. Then, $\mathbf{E}(\overline{\omega})$ is the total energy  of this system. }
\end{quotation}

Following the notations introduced in \cite[Sec. 2]{Ism00-A}, the Laguerre-Sobolev inner product creates two external fields. One is a long-range field whose potential is $\mathbf{F}(\overline{\omega})$. The other  is a short-range field whose potential is  $\mathbf{G}(\overline{\omega})$. So, the total external potential $\mathbf{H}_n(\overline{\omega})$  is the sum of the short and long-range potentials, which is dependent on $n$ (varying external potential).

Therefore, for each $k=1,\dots, n$; we have $\dsty \frac{\partial  \mathbf{E}}{\partial \omega_k}(\overline{x}_n)=   0$, this is, the zeros of $S_n$ are the zeros  of  the gradient  of the total potential of energy $\mathbf{E}(\overline{\omega})$ ($\nabla \mathbf{E}(\overline{x}_n)=0$).

\begin{theorem}\label{PositiveHessian}
The zeros of $S_n(x)$ are a local minimum of $\mathbf{E}(\overline{\omega})$,  if  for all $k=1,\dots, n;$
\begin{enumerate}
  \item $\dsty \frac{\partial  \mathbf{E}}{\partial \omega_k}(\overline{x}_n)=0$.
  \item $\dsty \frac{\partial^2  \mathbf{H}_n}{\partial {\omega}_k^2}(\overline{x}_n) =\frac{\partial^2  \mathbf{F}}{\partial {\omega}_k^2}(\overline{x}_n)+\frac{\partial^2 \mathbf{G}}{\partial {\omega}_k^2}(\overline{x}_n) {=\ - \frac{1}{2} \left( \frac{ \mathcal{P}_{1,n}(x) }{ \mathcal{P}_{2,n}(x) }\right)'> 0}. $
\end{enumerate}
\end{theorem}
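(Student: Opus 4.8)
The plan is to show that the Hessian $\nabla^2\mathbf{E}(\overline{x}_n)$ is positive definite at the critical point, which (together with condition 1, already established) yields that $\overline{x}_n$ is a local minimum. The argument follows the now-standard Stieltjes--Ismail strategy used in \cite{Ism00-A} and \cite{PiQuiMi23}: compute the entries of the Hessian of $\mathbf{E}$, exploit the special structure coming from the logarithmic pair-interaction term, and apply Gershgorin's theorem (as in the classical Laguerre computation \eqref{Hessian_M-class}) to force all eigenvalues to be positive.

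First I would compute the second partials of $\mathbf{E}(\overline{\omega})=\sum_{k<j}\log\frac{1}{\omega_j-\omega_k}+\mathbf{H}_n(\overline{\omega})$. The off-diagonal entries come only from the pair-interaction term, giving $\frac{\partial^2\mathbf{E}}{\partial\omega_k\partial\omega_j}(\overline{x}_n)=-\frac{1}{(x_{n,k}-x_{n,j})^2}$ for $k\neq j$, exactly as in \eqref{Hessian_M-class}. The diagonal entries are $\frac{\partial^2\mathbf{E}}{\partial\omega_k^2}(\overline{x}_n)=\sum_{i\neq k}\frac{1}{(x_{n,k}-x_{n,i})^2}+\frac{\partial^2\mathbf{H}_n}{\partial\omega_k^2}(\overline{x}_n)$. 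The first summand is precisely the sum of the absolute values of the off-diagonal entries in row $k$, so Gershgorin's theorem \cite[Th. 6.1.1]{Horn90} tells us every eigenvalue $\lambda$ of the (real symmetric, hence real-spectrum) Hessian satisfies, for some $k$,
\begin{equation*}
\left|\lambda-\sum_{i\neq k}\frac{1}{(x_{n,k}-x_{n,i})^2}-\frac{\partial^2\mathbf{H}_n}{\partial\omega_k^2}(\overline{x}_n)\right|\leq\sum_{i\neq k}\frac{1}{(x_{n,k}-x_{n,i})^2},
\end{equation*}
whence $\lambda\geq\frac{\partial^2\mathbf{H}_n}{\partial\omega_k^2}(\overline{x}_n)$. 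By hypothesis 2 this quantity is strictly positive, so all eigenvalues are positive and the Hessian is positive definite; since $\nabla\mathbf{E}(\overline{x}_n)=0$ by hypothesis 1, $\overline{x}_n$ is a (strict) local minimum.

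The remaining point is to justify the identity asserted in hypothesis 2, namely $\frac{\partial^2\mathbf{F}}{\partial\omega_k^2}(\overline{x}_n)+\frac{\partial^2\mathbf{G}}{\partial\omega_k^2}(\overline{x}_n)=-\frac12\left(\frac{\mathcal{P}_{1,n}}{\mathcal{P}_{2,n}}\right)'(x_{n,k})$. This is a direct computation from the partial-fraction decomposition of $\mathcal{P}_{1,n}/\mathcal{P}_{2,n}$ established just before the theorem: each term $\frac{\ell}{x-a}$ in that decomposition contributes, after one differentiation, $-\frac{\ell}{(x-a)^2}$, and matching this term by term against $\frac{\partial^2}{\partial\omega_k^2}$ of the corresponding logarithmic summand in $\mathbf{F}$ and $\mathbf{G}$ (recalling $\frac{\partial^2}{\partial\omega^2}\log\frac{1}{|\omega-a|^{\ell}}=\frac{\ell}{(\omega-a)^2}$, while the linear term $\frac12|\omega_k|$ contributes nothing to the second derivative) gives the claimed identity; the constant $-1$ in the decomposition and the $|\omega_k|$ term together produce a vanishing second-derivative contribution, so they drop out. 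I expect the main obstacle to be purely bookkeeping: carefully tracking signs and the factor $\tfrac12$ through the decomposition \eqref{DescompCociente}--\eqref{PtosCrit-Potencial} and confirming that the sign conventions on $\ell_1,\ell_{2,j},\ell_{3,j}$ (positive) versus $\ell_{4,j}$ (entering with a minus because the $e_j$ lie off the real axis and that term has the opposite sign in \eqref{PtosCrit-Potencial}) are consistent with the way $\mathbf{G}$ was written. Once hypothesis 2 is granted as an assumption of the theorem, the Gershgorin argument is immediate and short.
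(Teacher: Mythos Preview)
Your proposal is correct and follows essentially the same approach as the paper: compute the Hessian of $\mathbf{E}$, observe that the off-diagonal entries are $-\frac{1}{(x_{n,k}-x_{n,j})^2}$ while the diagonal entries are the row-sums of absolute values plus $\frac{\partial^2\mathbf{H}_n}{\partial\omega_k^2}(\overline{x}_n)$, and then apply Gershgorin's theorem to conclude $\lambda\geq\frac{\partial^2\mathbf{H}_n}{\partial\omega_k^2}(\overline{x}_n)>0$. The paper's proof is in fact even more terse than yours---it does not separately verify the identity $\frac{\partial^2\mathbf{H}_n}{\partial\omega_k^2}=-\tfrac12(\mathcal{P}_{1,n}/\mathcal{P}_{2,n})'$ inside the proof, treating hypothesis 2 purely as an assumption---so your additional bookkeeping paragraph is extra but harmless.
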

\begin{proof}

The Hessian matrix of $\mathbf{E}$ at $\overline{x}_n$ is given by
\begin{equation}\label{Hessian_MSob}
\nabla_{\overline{\omega}\,\overline{\omega}}^2 \mathbf{E}(\overline{x}_n) = \begin{cases}
\dsty \frac{\partial^2 \mathbf{E}}{\partial {\omega}_k\partial {\omega}_j}(\overline{x}_n)=-\frac{1}{(x_{n,k}-x_{n,j})^{2}}, & \text{if } \;k\neq j,\\
\dsty \frac{\partial^2 \mathbf{E}}{\partial {\omega}_k^2}(\overline{x}_n)= \sum_{{{i=1} \atop {i \neq k}}}^{n} \frac{1}{(x_{n,k}-x_{n,i})^2}+\frac{\partial^2 \mathbf{H}_n }{\partial {\omega}_k^2}(\overline{x}_n), & \text{if } \;k = j.
\end{cases}
\end{equation}
{ Since \eqref{Hessian_MSob} is a symmetric real matrix, its eigenvalues are real. Therefore, using Gershgorin's Theorem \cite[Th. 6.1.1]{Horn90}, the eigenvalues $\lambda$ of the Hessian at $\overline{x}$ satisfy
\begin{align*}
    \left|\lambda-\sum_{{{i=1} \atop {i \neq k}}}^{n} \frac{1}{(x_{n,k}-x_{n,i})^2}-\frac{\partial^2 \mathbf{H}_n}{\partial {\omega}_k^2}(\overline{x}_n)\right|\leq  \sum_{{{j=1} \atop {j \neq k}}}^{n}\left|\frac{\partial^2 \mathbf{E}}{\partial {\omega}_k\partial {\omega}_j}(\overline{x}_n)\right|= \sum_{{{i=1} \atop {i \neq k}}}^{n} \frac{1}{(x_{n,k}-x_{n,i})^2}.
\end{align*}
for some  $k=1,2,\dots,n$. Then, we have
\begin{align*}
    \lambda\geq \frac{\partial^2 \mathbf{H}_n}{\partial {\omega}_k^2}(\overline{x}_n)>0.
\end{align*}
}

\end{proof}

The computations of the following examples have been performed by using the symbolic computer algebra system \emph{Maxima} \cite{OchMak20}.

In all cases, we fixed $n=12$ and consider sequentially-ordered Sobolev inner product  (see Definition \ref{Set-SOrdered-General} and  Lemma \ref{Th_ZerosSimp}). From \eqref{PtosCrit-Potencial}, it is obvious that $\nabla \mathbf{E}(\overline{x}_{12})=0$, where $\overline{x}_{12}=  (x_{12,1},x_{12,2},\cdots,x_{12,n})$ and $S_{12}(x_{12,k})=0$ for $k=1,\;2,\dots,\,12$. Under the above condition, $\overline{x}_{12}$ is a local minimum (maximum) of $\mathbf{E}$ if the corresponding Hessian matrix at $\overline{x}_{12}$ is positive (negative) definite, in any other case $\overline{x}_{12}$ is said to be a saddle point. We recall that a square matrix is positive (negative) definite if all its eigenvalues are positive (negative).

\begin{example}[Case in which the conditions of the Theorem \ref{PositiveHessian} are satisfied]\

\begin{enumerate}
  \item Laguerre-Sobolev inner product $\dsty \IpS{f}{g}=\int_{0}^{+\infty} f(x)g(x)x^{11}e^{-x}dx+f^{\prime}(-2)g^{\prime}(-2)$.
  \item Zeros of $S_{12}(x)$.
   \begin{align*}
     \overline{x}_{12}=& \left(3.0537,\, 5.16053,\, 7.53124,\, 10.2434,\, 13.3451,\, 16.8869,\,\right.  \\
       &\left.    20.9337,\, 25.5751,\, 30.9455,\, 37.2657,\, 44.9569,\, 55.0972 \right).
   \end{align*}
  \item Total potential of energy  $\dsty \mathbf{E}(\overline{\omega}) =\sum_{1\leq k<j \leq 12 }\log{\frac{1}{|\omega_j-\omega_k|}} + \mathbf{F}(\overline{\omega})+\mathbf{G}(\overline{\omega})$,   where
  \begin{align*}
\mathbf{F}(\overline{\omega})= &\frac{1}{2}\sum_{k=1}^{12} \left(|\omega_k| +12\log{\frac{1}{\left|\omega_k \right|}}  + \log{\frac{1}{\left| \omega_k+2 \right|^3} }\right),\; \\
 \mathbf{G}(\overline{\omega})= & \frac{1}{2}\sum_{k=1}^{12} \log\left| (\omega_k + 0.528573)(\omega_k + 1.7501)(\omega_k - 1.40334)\right|.
\end{align*}

  \item From \eqref{PtosCrit-Potencial}, $\dsty \frac{\partial  \mathbf{E}}{\partial \omega_j}(\overline{x}_{12})=0, $ for $j=1,\dots, 12$.

  \item Computing the corresponding Hessian matrix at $\overline{x}_{12}$, we have that the {approximate eigenvalues}  are
\begin{align*}
&   \left\{
0.0127\;
0.0304\;
0.0517\;
0.0778\;
0.1102\;
0.1509\;\;\right. \\
&   \left.
0.2033\;
0.2722\;
0.3653\;
0.495\;
0.6825\;
0.9661 \right\}.
\end{align*}
\end{enumerate}

Thus, Theorem \ref{PositiveHessian} holds for this example, and we have the required local electrostatic equilibrium distribution.
\end{example}

\begin{example}[Case in which the conditions of the Theorem \ref{PositiveHessian} are satisfied]\

\begin{enumerate}
  \item Laguerre-Sobolev inner product $\dsty \IpS{f}{g}=\int_{0}^{+\infty} f(x)g(x)x^{14}e^{-x}dx+f^{\prime}(-2)g^{\prime}(-2)$.
  \item Zeros of $S_{12}(x)$.
   \begin{align*}
     \overline{x}_{12}= & \left(4.7832,\, 7.23584,\, 9.92786,\, 12.9448,\, 16.3404,\, 20.1693, \right.  \\
       &\left.    24.4992,\, 29.4232,\, 35.0794,\, 41.6941,\, 49.6983,\, 60.1956   \right).
   \end{align*}

  \item Total potential of energy  $\dsty \mathbf{E}(\overline{\omega}) =\sum_{1\leq k<j \leq 12 }\log{\frac{1}{|\omega_j-\omega_k|}} + \mathbf{F}(\overline{\omega})+\mathbf{G}(\overline{\omega})$,   where
  \begin{align*}
\mathbf{F}(\overline{\omega})= &\frac{1}{2}\sum_{k=1}^{12} \left(|\omega_k| +15\log{\frac{1}{\left|\omega_k \right|}}  + \log{\frac{1}{\left| \omega_k+2 \right|^3} }\right),\; \\
 \mathbf{G}(\overline{\omega})= & \frac{1}{2}\sum_{k=1}^{12} {\log\left| (\omega_k +1.87468) \tau(\omega_k)\right|} \;
\text{ and }  \; \tau(x)  =  4.25297+4.10532 x+{{x}^{2}} >  0.
\end{align*}

  \item From \eqref{PtosCrit-Potencial}, $\dsty \frac{\partial  \mathbf{E}}{\partial \omega_j}(\overline{x}_{12})=0, $ for $j=1,\dots, 12$.
  \item Computing the corresponding Hessian matrix at $\overline{x}_{12}$, we have that the {approximate eigenvalues}  are
\begin{align*}
&   \left\{
0.0152, \;
0.0344, \;
0.0576, \;
0.0861, \;
0.1219, \;
0.1678, \;\right. \\
&   \left.
0.2279, \;
0.3094, \;
0.4241, \;
0.5942, \;
0.8665, \;
1.3566 \right\}.
\end{align*}
\end{enumerate}

Thus, Theorem \ref{PositiveHessian} holds for this example, and we have the required local electrostatic equilibrium distribution.
\end{example}
\begin{example}[Case in which the conditions of the Theorem \ref{PositiveHessian} are satisfied ]\

\begin{enumerate}
  \item Laguerre-Sobolev inner product $\dsty \IpS{f}{g}=\int_{0}^{+\infty} f(x)g(x)x^{11}e^{-x}dx+f^{\prime\prime}(-2)g^{\prime\prime}(-2)$.
  \item Zeros of $S_{12}(x)$.
   \begin{align*}
     \overline{x}_{12}=&   \left(3.35093,\,
   5.41033,\,
   7.75809,\,
   10.456,\,
   13.5478,\,
   17.0825,\, \right.\\
   & \left. 21.1239,\,
   25.7612,\,
   31.1283,\,
   37.4459,\,
   45.1347,\,
   55.2729\right).
   \end{align*}

  \item Total potential of energy  $\dsty \mathbf{E}(\overline{\omega}) =\sum_{1\leq k<j \leq 12 }\log{\frac{1}{|\omega_j-\omega_k|}} + \mathbf{F}(\overline{\omega})+\mathbf{G}(\overline{\omega})$,   where
  \begin{align*}
\mathbf{F}(\overline{\omega})= &\frac{1}{2}\sum_{k=1}^{12} \left(|\omega_k| +12\log{\frac{1}{\left|\omega_k \right|}}  + \log{\frac{1}{\left| \omega_k+2 \right|^4} }\right),\; \\
 \mathbf{G}(\overline{\omega})= & \frac{1}{2}\sum_{k=1}^{12} \log\left| (\omega_k +0.0989292)(\omega_k + 1.64715)\tau(\omega_k)\right| \\
& \text{ and }  \tau(x)  =  5.72898-1.63056 x+{{x}^{2}} >  0.
\end{align*}

  \item From \eqref{PtosCrit-Potencial}, $\dsty \frac{\partial  \mathbf{E}}{\partial \omega_j}(\overline{x}_{12})=0, $ for $j=1,\dots, 12$.

  \item Computing the corresponding Hessian matrix at $\overline{x}_{12}$, we have that the {approximate eigenvalues}  are
\begin{align*}
&   \left\{
0.0126\;
0.0303\;
0.0516\;
0.0777\;
0.1101\;
0.151\;\;\right. \\
&   \left.
0.2038\;
0.2737\;
0.3689\;
0.5042\;
0.7066\;
1.0321\right\}.
\end{align*}
\end{enumerate}

Thus, Theorem \ref{PositiveHessian} holds for this example, and we have the required local electrostatic equilibrium distribution.

\end{example}
\begin{example}[Case in which the conditions of the Theorem \ref{PositiveHessian} are satisfied]\

\begin{enumerate}
  \item Laguerre-Sobolev inner product $$\dsty \IpS{f}{g}=\int_{0}^{+\infty} f(x)g(x)x^{14}{e^{-x}}dx+f^{\prime}(-1)g^{\prime}(-1) + f^{\prime\prime}(-2)g^{\prime\prime}(-2).$$
  \item Zeros of $S_{12}(x)$.
   \begin{align*}
     \overline{x}_{12}=& \left(   4.78339,\,7.23607,\,9.9281,\,12.9451,\,16.3407,\,20.1695, \right.  \\
       &\left.  24.4995,\,29.4235,\,35.0797,\,41.6944,\,49.6986,\,60.196 \right).
   \end{align*}

  \item Total potential of energy  $\dsty \mathbf{E}(\overline{\omega}) =\sum_{1\leq k<j \leq 12 }\log{\frac{1}{|\omega_j-\omega_k|}} + \mathbf{F}(\overline{\omega})+\mathbf{G}(\overline{\omega})$,   where
  \begin{align*}
\mathbf{F}(\overline{\omega})= &\frac{1}{2}\sum_{k=1}^{12} \left( \log{\frac{1}{\left| \omega_k\right|^{15}} }+ \log{\frac{1}{\left|\omega_k+1 \right|^3} }+\log{\frac{1}{\left|\omega_k+2 \right|^4} }\right),\; \\
 \mathbf{G}(\overline{\omega})= & \frac{1}{2}\sum_{k=1}^{12} \log\left|{(\omega_k +0.933652)\tau_1(\omega_k)\tau_2(\omega_k)\tau_3(\omega_k) }\right|\\
& \text{and } \quad  \begin{aligned}
\tau_1(x) & =   1.07168+2.06058 x+{{x}^{2}}>0,\\
\tau_2(x) & =   3.19751+3.56774 x+{{x}^{2}}>0,\\
\tau_3(x) & =   4.99621+4.42482 x+{{x}^{2}}>0.
\end{aligned}
\end{align*}
  \item From \eqref{CritPoint}, $\dsty \frac{\partial  \mathbf{E}}{\partial \omega_j}(\overline{x}_{12})=0, $ for $j=1,\dots, 12$.
  \item Computing the corresponding Hessian matrix at $\overline{x}_{12}$, we have that the {approximate eigenvalues}  are
\begin{align*}
&   \left\{0.0117,\;
0.0278,\;
0.0469,\;
0.0699,\;
0.0978,\;
0.1322,\right. \\
&   \left. 0.1752,\;
0.2301,\;
0.3016,\;
0.3973,\;
0.5292,\;
0.7179 \right\}.
\end{align*}
   \end{enumerate}

Thus, Theorem \ref{PositiveHessian} holds for this example, and we have the required local electrostatic equilibrium distribution.

\end{example}


\begin{example}[Case in which the conditions of the Theorem \ref{PositiveHessian} are not satisfied]\label{Examp_NoS}\

\begin{enumerate}
  \item Laguerre-Sobolev inner product $$\dsty \IpS{f}{g}=\int_{0}^{+\infty} f(x)g(x)e^{-x}dx+f^{\prime}(-1)g^{\prime}(-1)+f^{\prime\prime}(-2)g^{\prime\prime}(-2).$$
  \item Zeros of $S_{12}(x)$.
   \begin{align*}
     \overline{x}_{12}=& \left(-2.86242,\, -1.69526,\, 0.284629,\, 1.36447,\, 3.03668,\, 5.23686,\, \right. \\
       & \left. 7.98826,\, 11.3572,\, 15.4574,\, 20.4841,\, 26.8154,\, 35.422 \right).
         \end{align*}
  \item Total potential of energy  $\dsty \mathbf{E}(\overline{\omega}) =\sum_{1\leq k<j \leq 12 }\log{\frac{1}{|\omega_j-\omega_k|}} + \mathbf{F}(\overline{\omega})+\mathbf{G}(\overline{\omega})$,   where
      \begin{align*}
\mathbf{F}(\overline{\omega})= &\frac{1}{2}\sum_{k=1}^{12} \left( \log{\frac{1}{\left| \omega_k-1 \right|} }+ \log{\frac{1}{\left| \omega_k+1 \right|} }+ \log{\frac{1}{\left| \omega_k-2 \right|^3} }\right),\\
 \mathbf{G}(\overline{\omega})= & \frac{1}{2}\sum_{k=1}^{12} \log\left|{(\omega_k +1.44591)(\omega_k +1.78194)(\omega_k+2.7333) \tau_1(\omega_k)\tau_2(\omega_k) }\right|\;\\
& \text{and } \quad 
\begin{aligned}
\tau_1(x)  & =   0.440466+1.23097 x+{{x}^{2}} >  0,\\
\tau_2(x)  & =   2.76889+3.19398 x+{{x}^{2}} >  0.
\end{aligned}
\end{align*}
  \item From \eqref{PtosCrit-Potencial}, $\dsty \frac{\partial  \mathbf{E}}{\partial \omega_j}(\overline{x}_{12})=0, $ for $j=1,\dots, 12$.
  \item Computing the corresponding Hessian matrix at $\overline{x}_{12}$, we have that the {approximate eigenvalues}  are
\begin{align*}
&   \left\{-45.8083, \,
-27.1075 , \,
0.0188, \,
0.0473, \,
0.0853, \,
0.1377, \, \,\right. \\
&   \left. 0.213, \,
0.3272, \,
0.5154, \,
0.8688, \,
1.7428, \,
7.4559\right\}.
\end{align*}
\end{enumerate}
Then,  $\overline{x}_{12}$ is a saddle point  of $\mathbf{E}(\overline{\omega})$.
\end{example}


\bigskip

\begin{remark}
As can be noticed,  in some cases the configuration given by the external field includes complex points, they correspond to $e_j$. Specifically, in the examples, these points are given as the zeros of $\tau(x)$. Since $\phi_{1,n}(x)$ is a polynomial of real coefficients, its non-real zeros arise as complex conjugate pairs. Note that
$$
\frac{a}{x-z}+\frac{a}{x-\overline{z}} = a\frac{2x+2\Re{z}}{x^2+2\Re{z}+|z|^2};
$$
where $\Re z$ denotes the real part of $z$. The anti derivative of the previous expression is $a\ln(x^2+2\Re z+ |z|^2)$. This means in our current case that the presence of complex roots does not change  the formulation of the energy function.
\end{remark}

\begin{remark}
Theorem \ref{PositiveHessian} {gives} a general condition to determine whether the electrostatic model  is an   extension of the classical cases. However, in  Example \ref{Examp_NoS}, the Hessian has {two negatives eigenvalues corresponding to the first two variables $\omega_1$ and $\omega_2$ }.

Therefore, we do not have the nice interpretation given in Theorem \ref{PositiveHessian}. However, note that the rest of the eigenvalues are positive, {which means that the number
$${\frac{\partial^2 \mathbf{H}_n}{\partial {\omega}_k^2}}(\overline{x}_n)$$
remains positive for $k=3,\ldots, 12$.} In this case, the potential function exhibits a saddle point. The presence of the saddle point is somehow justified by the attractor points $-1.44591$, $-1.78194$ and $-2.7333$ having two zeros in its vicinity $x_{12,1} \approx -2.86242$ and $x_{12,2} \approx -1.69526$. In this case, we are able to give an interpretation of the position of the zeros by considering a problem of conditional extremes.

Assume that when checking the Hessian we obtained that the eigenvalues $\lambda_{n,i}$, for the indexes $i \in \mathcal{E}\subset \lbrace 1,2,\ldots,n \rbrace$, are negative or zero. Without loss of generality, assume that this happens for the first $m_{\mathcal{E}}=|\mathcal{E}|$ variables.  This is a saddle point. However,  the   rest of the eigenvalues are positive, which means that the truncated Hessian $\nabla_{\omega_{m_{\mathcal{E}}}\omega_{m_{\mathcal{E}}}}^2 \mathbf{E}$ formed by taking the last $n-m_{\mathcal{E}}$ rows and columns of $\nabla_{\overline{\omega}\,\overline\omega}^2 \mathbf{E}_R$ is a positive definite matrix by the same arguments used in the proof of Theorem \ref{PositiveHessian}.

Thus, let us define the following conditional extremes problem   with the following notation $\overline \omega = \overline \omega_n = (\omega_1,\omega_2,\ldots,\omega_n)\in \mathds R^n$

\begin{equation*} 
\begin{aligned}
& \min_{\overline{\omega}_n\in \mathds R^n} \mathbf{E}(\overline \omega_n)\\
&\text{subject to } \omega_k - x_k = 0, \; \text{for all }\;  k=1,\ldots,m_{\mathcal E}.
\end{aligned}
\end{equation*}

Note that this problem is equivalent to solve
\begin{equation*}
\min_{\overline \omega_{n-m_\mathcal{E}} \in \mathds R^{n-m_\mathcal{E}}}  \mathbf{E}_R(x_{1},\ldots,x_{m_{\mathcal E}},\overline \omega_{n-m_\mathcal{E}}).
\end{equation*}

Let us prove that $\overline x_{n-m_\mathcal{E}}$ is a minimum of this problem. Note that the gradient of this function corresponds to the last $n-m_\mathcal{E}$ conditions of \eqref{PtosCrit-Potencial}, and the second order condition is given by the truncated Hessian $\nabla_{\omega_{m_{\mathcal{E}}}\omega_{m_{\mathcal{E}}}}^2 \mathbf{E}(\overline x_{m_{\mathcal{E}}})$, which is, by hypothesis, positive definite.

Therefore, the configuration $\overline{x}_n$ corresponds to the local equilibrium of the energy function \eqref{LogPotential} once the first $m_{\mathcal{E}}$ charges are fixed.

\end{remark}



\begin{thebibliography}{00}



\bibitem{Chi78} T. S. Chihara,  {An Introduction to Orthogonal Polynomials}, Gordon and Breach, New York. (1978).

\bibitem{DiPiPe20} A. D\'{\i}az-Gonz\'{a}lez,  H. Pijeira-Cabrera and I. P\'{e}rez-Yzquierdo, Rational approximation and Sobolev-type orthogonality, J. Approx. Theory \textbf{260} (2020). 105481.

\bibitem{DiHerPi23} A. D\'{\i}az-Gonz\'{a}lez,  J. Hern\'{a}ndez and H. Pijeira-Cabrera, Sequentially Ordered Sobolev Inner Product and Laguerre–Sobolev Polynomials, Mathematics, \textbf{11(8)} (2023), 1956.

\bibitem{DiPiQui22} A. D\'{\i}az-Gonz\'{a}lez,  H. Pijeira-Cabrera and Javier Quintero-Roba, Polynomials of Least Deviation from Zero in Sobolev
$p$-Norm, Bull. Malays. Math. Sci. Soc., \textbf{45} (2022), 889--912.


\bibitem{DuenasMarcellan} H. Due\~{n}as, F. Marcell\'{a}n, The Laguerre-Sobolev-type orthogonal polynomials. Holonomic equation and electrostatic interpretation, {Rocky Mountain J. Math.}, 9(2011). 5--131.

\bibitem{Duenas11} H. Due\~{n}as, F. Marcell\'{a}n, The holonomic equation of the Laguerre–Sobolev-type orthogonal polynomials: a non-diagonal case, {J. Difference Equ. Appl.}, \textbf{17}, (2011) 877--887.


\bibitem{Horn90} R. A. Horn, C. R. Johnson, Matrix analysis, Cambridge Univ. Press, Cambridge, (1990).



\bibitem{HMP-PAMS14} E. J. Huertas, F. Marcell\'{a}n and H. Pijeira-Cabrera,  {An electrostatic model for zeros of perturbed Laguerre polynomials}, Proc.
Amer. Math. Soc. \textbf{142} (5) (2014), 1733--1747.


\bibitem{Ism00-A} M. E. H. Ismail, {An electrostatics model for zeros of general orthogonal polynomials}, Pacific J. Math. \textbf{193} (2000), 355--369.

\bibitem{Ism00-B} M. E. H. Ismail, {More on electrostatic models for zeros of orthogonal polynomials}, Numer. Funct. Anal. Optimiz. \textbf{21} (2000), 191--204.



\bibitem{LopMarVan95}
G. L\'{o}pez Lagomasino, F. Marcell\'{a}n, and W. Van Assche, Relative asymptotics for orthogonal polynomials with respect to a discrete Sobolev inner product,  Constr. Approx., \textbf{11} (1995), 107--137.

\bibitem{MarMarMar07} F. Marcell\'an, A. Mart\'inez-Finkelshtein, and P. Mart\'inez, {Electrostatic models for zeros of polynomials: Old, new, and some open problems}, J. Comput. Appl. Math. \textbf{207} (2007), 258--272.



\bibitem{Molano13}  L. Molano Molano, On Laguerre-Sobolev type orthogonal polynomials: zeros and electrostatic interpretation, \textit{ANZIAM J.}, \textbf{55}, 39--54 (2013).


 \bibitem{OrGa10}    R. Orive and  Z. Garc\'{\i}a. On a class of equilibrium problems in the real axis,  J. Comput. Appl. Math. 235 (2010), 1065-1076.

\bibitem{OchMak20} A. \"{O}chsner and R. Makvandi. Numerical engineering optimization. Application of the computer algebra system Maxima, Springer, Cham, Switzerland, (2020).

\bibitem{PiQuiMi23}  H. Pijeira-Cabrera, J. Quintero-Roba and J. Toribio-Milane. Differential Properties of Jacobi-Sobolev Polynomials and Electrostatic Interpretation, Mathematics 2023, 11(15), 3420.

 \bibitem{Sti-1885-1}  T.J. Stieltjes.  Sur quelques th\'{e}or\`{e}mes d'alg\`{e}bre, Comptes Rendus de l'Academie des Sciences, Paris, 100 (1885), 439--440.
 \bibitem{Sti-1885-2}   T.J. Stieltjes.   Sur les polyn\^{o}mes de Jacobi, Comptes Rendus de l'Academie des Sciences, Paris, 100 (1885), 620--622.

\bibitem{Szego75} G. Szeg\H{o}, {Orthogonal Polynomials}, $4^{th}$ ed., Amer. Math. Soc. Colloq. Publ. Series, vol \textbf{23}, Amer. Math. Soc., Providence, RI. (1975).

\bibitem{VanAss93} W. Van Assche, The impact of Stieltjes work on continued fractions and orthogonal polynomials, in:  ``Thomas Jan Stieltjes Oeuvres Compl\`{e}tes - Collected Papers'', G. van Dijk, ed., Springer, (1993), 5--37.

\bibitem{ValAss95} G. Valent and W. Van Assche, The impact of Stieltjes's work on continued fractions and orthogonal polynomials: additional material, J. Comput. Appl. Math. 65 (1995), 419--447.


\end{thebibliography}
\end{document}